\newcommand{\Z}{{\ensuremath{\mathbb{Z}}}}
\newcommand{\R}{{\ensuremath{\mathbb{R}}}}
\newcommand{\N}{{\ensuremath{\mathbb{N}}}}
\newcommand{\Q}{{\ensuremath{\mathbb{Q}}}}
\newcommand{\T}{{\ensuremath{\mathbb{T}}}}
\newcommand{\eL}{{\ensuremath{\mathbb{L}}}}
\newcommand{\eM}{{\ensuremath{\mathcal{M}}}}
\newcommand{\B}{{\ensuremath{\mathcal{B}}}}
\renewcommand{\hom}{\mathrm{Homeo}}
\newcommand{\de}{\textnormal{Desv}}
\newcommand{\dm}{\textnormal{diam}}
\newcommand{\cv}{\textnormal{Conv}}
\renewcommand{\Pr}{\textnormal{Pr}}
\def \ti{\tilde}
\newtheorem{main}{Theorem}[section]
\newtheorem{defi}{Definition}[section]
\newtheorem{teo}[defi]{Theorem}
\newtheorem{pro}[defi]{Proposition}
\newtheorem{cor}[defi]{Corollary}
\newtheorem{lem}[defi]{Lemma}
\newtheorem{afp}{Claim}
\newtheorem{ob}[defi]{Remark}
\title[Rotation sets for commuting torus homeomorphisms]{Restrictions on rotation sets for commuting torus homeomorphisms}
\author[D. M. S. Castelblanco]{Deissy M. S. Castelblanco}
\keywords{Torus homeomorphisms, rotation set, commuting homeomorphisms}
\email{deissy@ime.usp.br}
\begin{document}

\begin{abstract}
Let $K_1,\: K_2\subset \R^2$ be two convex, compact sets. We would like to know if there are commuting torus homeomorphisms $f$ and $h$ homotopic to the identity, with lifts $\tilde f$ and $\tilde h$, such that $K_1$ and $K_2$ are their rotation sets, respectively. In this work, we proof some cases where it cannot happen, assuming some restrictions on rotation sets. 
\end{abstract}

\maketitle


\section{Introduction}

In a landmark paper \cite{mz89}, Miziurewicz and Zieman have introduced the concept of rotation sets for torus homeomorphisms in the isotopy class of the identity. This concept generalizes the notion of rotation number of circle homeomorphisms defined by H. Poincar\'e, and  has proven to be a very effective tool in describing behaviour of toral dynamics. For instance, the analysis of the rotation sets can supply information on the abundance of periodic points \cite{f89}, the topological entropy of the map \cite{l} and sensitive dependence on initial conditions \cite{kf13}. We define rotation sets properly on section \ref{rotset} but, in a nutshell, the rotation set of a torus homeomorphisms is the convex closure of all rotation vectors for individual points, that is, 
$$\rho(\tilde f, \tilde x)= \lim_{n\to\infty}\frac{\tilde f^n(\tilde x)- \tilde x}{n},$$
when such limit exists. \medskip

This study of the rotation theory, while it has seen significant advances in the last decade, still has left several relevant open questions, and the most relevant is probably to characterize the possible rotation sets for torus homeomorphisms.  Up until very recently, it was not known if there existed a convex compact set that was not the rotation set of some torus homeomorphism, but the first example of such sets appeared in \cite{ct}. The standing conjecture on the subject, proposed by Franks and Misiurewicz on \cite{fm}, poses that the following convex sets could not be the rotation sets of torus homeomorphisms.
	\begin{itemize}
		\item[i] A segment of a line with irrational slope that contains a rational point in its interior.
		\item[ii] A segment of a line with irrational slope without rational points.
		\item[iii] A segment of a line with rational slope without rational points.
	\end{itemize}
	
In \cite{ct}, the authors showed that the conjecture is true in case i, but A. Avila has announced a construction for a torus homeomorphism whose rotation set lies in the class ii above. It is still not know if the unit circle can be realized as rotation set. A very recent and relevant work on the subject, \cite{pat} exhibits a one parameter family of torus homeomorphisms and study the corresponding family of rotation sets.\medskip

In this work we attempt to analyse possible connections between the study of group actions and the rotation theory for torus homeomorphisms. In particular, we were interested in determining, given a pair of commuting torus homeomorphisms, if there was some relationship between the respective rotation sets. The first question we tackled was to see if given compact convex sets $K_1$ and $K_2$ such that each $K_i$ was a rotation set for a torus homeomorphism, could we also find a pair of commuting homeomorphisms $f_i$ such that $K_i$ was their respective rotation set?  Our first result shows that this is not true in general. Specifically, we show that:

\begin{main}\label{ncom0}
If $\rho(\tilde f)$ is a segment $J$ with rational slope containing rational points and $\rho(\tilde h)$ is whatever following cases:
\begin{enumerate}
	\item It has nonempty interior.
	\item It is any nontrivial segment nonparallel to $J$.
\end{enumerate}
Then, $\tilde f$ and $\tilde h$ do not commute.
\end{main}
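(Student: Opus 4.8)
The plan is to prove the contrapositive: assuming $f$ and $h$ commute, I would show that $\rho(\tilde h)$ is contained in the line through $J$; this is impossible in case (1), since a convex set with nonempty interior lies in no line, and in case (2), since a nontrivial segment not parallel to $J$ is not contained in the line through $J$. A preliminary remark is that commuting torus homeomorphisms homotopic to the identity have commuting lifts: if $\tilde f\tilde h=\tilde h\tilde f+w$ with $w\in\Z^2$, then for every $f$--invariant probability measure $\mu$ the pushforward $h_*\mu$ is $f$--invariant and the displacement cocycle identity gives $\rho(\tilde f,h_*\mu)=\rho(\tilde f,\mu)+w$; as the rotation vectors of $f$--invariant measures fill the bounded set $\rho(\tilde f)$, this forces $\rho(\tilde f)+w\subseteq\rho(\tilde f)$ and hence $w=0$. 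After conjugating $f$ and $h$ by a suitable $A\in GL(2,\Z)$ (which replaces both rotation sets by their image under $A$) the segment $J$ becomes horizontal, and after replacing $f$ by an iterate composed with a vertical integer translation — which changes neither $h$, nor the commutation, nor the hypotheses on $\rho(\tilde h)$ — we may assume $\rho(\tilde f)=[a,b]\times\{0\}$ with $a<b$, and, replacing $f$ by $f^{-1}$ if necessary, $b>0$. It then suffices to show that $\rho(\tilde h)\subseteq\R\times\{0\}$.

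The main external ingredient I would invoke is that, since $\rho(\tilde f)$ is a segment of rational slope through a rational point (now the horizontal axis, after the normalization), $\tilde f$ has uniformly bounded deviation transverse to $\rho(\tilde f)$: there is $M>0$ with $|p_2(\tilde f^{\,n}(\tilde x)-\tilde x)|\le M$ for all $n\in\Z$ and all $\tilde x\in\R^2$, where $p_2$ is the projection to the second coordinate. This is the step in which the rational slope \emph{and} the rational point are both essential. I would also keep at hand the fact that rational points of $J$ are realized by periodic orbits, so that after a further iterate-plus-integer-translation $\tilde f$ has a fixed point and $\mathrm{Fix}(\tilde f)$ is a nonempty closed $\Z^2$--invariant set, which commutation makes $\tilde h$--invariant.

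For the core of the argument, suppose $\rho(\tilde h)\not\subseteq\R\times\{0\}$; replacing $h$ by $h^{-1}$ if needed, the largest vertical component $v^{+}$ of $\rho(\tilde h,\nu)$ over $h$--invariant $\nu$ is $>0$. From the identity $\rho(\tilde f,h_*\mu)=\rho(\tilde f,\mu)$ and its analogue with the roles of $f$ and $h$ exchanged, the vertical component of $\rho(\tilde h,\cdot)$ is $f_*$--invariant on $h$--invariant measures; together with Cesàro averaging and the Markov--Kakutani theorem this produces an $(f,h)$--invariant measure $\lambda$ with vertical rotation $v^{+}>0$, and similarly an $(f,h)$--invariant measure $\mu_{+}$ with $\rho(\tilde f,\mu_{+})=(b,0)$. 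Passing to ergodic components and using Birkhoff, a generic point of $\mu_{+}$ has $\tilde f$--orbit whose first coordinate tends to $+\infty$ while staying in a horizontal strip of width $\le 2M$, and a generic point $\tilde z$ of $\lambda$, chosen in a positive-measure $(f,h)$--invariant set where the vertical $h$--drift is positive, satisfies $p_2(\tilde h^{\,k}(\tilde z))\to+\infty$. Commutation and bounded deviation then force: for such $\tilde z$, the map $\tilde h^{\,k}$ carries the entire $\tilde f$--orbit of $\tilde z$ — hence its closure and all its $\Z^2$--translates, i.e.\ the full preimage of an $f$--invariant closed set — into a single horizontal strip of width $\le 2M$ whose height tends to $+\infty$. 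If that $f$--invariant closed set is essential in the horizontal direction it meets every horizontal line, which is absurd once the strip has risen past it; and producing an essential such set is where the nondegeneracy $a<b$ enters, by combining the dynamics seen by $\mu_{+}$ with that seen by $\lambda$. The symmetric argument rules out a negative vertical component, so $\rho(\tilde h)\subseteq\R\times\{0\}$, the contradiction sought.

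The step I expect to be the main obstacle is exactly the last one: producing a single orbit (or invariant measure) that simultaneously exhibits nontrivial horizontal winding under $\tilde f$ \emph{and} positive vertical drift under $\tilde h$, and ruling out the degenerate configurations in which all the positive vertical $h$--drift is carried by points with bounded — hence horizontally inessential — $\tilde f$--orbits. I expect this to require playing the extreme $f$--invariant measures (those realizing $(a,0)$ and $(b,0)$) against the $h$--maximizing ones and using the rigidity of bounded deviation to show that such degenerate configurations are incompatible with $\rho(\tilde f)$ being a genuine, nondegenerate segment. By contrast, the commuting-lift lemma, the coordinate normalization, and the appeal to bounded transverse deviation I expect to be routine.
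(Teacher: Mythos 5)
Your outer reductions line up with the paper's: you pass to the contrapositive, you reduce both cases (1) and (2) to the single claim that $\rho(\tilde h)$ must lie in the line carrying $J$ (the paper does this via Lemma \ref{proye} and Proposition \ref{wanul}, phrased in terms of unbounded deviation of $\tilde h$ in the direction of $J$), and you correctly identify that the rational slope \emph{and} the rational point are what give annularity/bounded transverse deviation of an iterate of $\tilde f$ (the paper's Theorem \ref{pa}). These steps are indeed routine. But the crux of your argument is exactly the part you defer, and it is not a technicality: it is the entire content of the paper's Theorem \ref{d1}. The paper's mechanism is quite different from what you sketch. It shows that if $\tilde f$ is $v$-annular and $\de_{v^{\bot}}(\tilde h)=\infty$, then $\de(\tilde f)<\infty$, i.e.\ $\tilde f$ is a pseudo-rotation --- which contradicts $J$ being a nontrivial segment. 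The proof of that implication (Lemma \ref{mau} plus Proposition \ref{d}) uses the unbounded deviation of $\tilde h$ to manufacture an open, $\tilde f$-invariant, vertically periodic set $E$ whose connected components have uniformly bounded diameter and which projects onto all of $\T^2$; Atkinson's lemma applied to an extremal ergodic measure then shows $\tilde f^{\overline n}$ sends a component of $E$ to an integer translate of itself, and a chaining argument over a finite cover of a fundamental domain by translates of components of $E$ yields bounded deviation of $\tilde f$ in \emph{all} directions. Nothing in your proposal substitutes for this construction.

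Beyond being incomplete, the one concrete step you do sketch for closing the gap is broken: you say $\tilde h^{\,k}$ carries ``the full preimage of an $f$-invariant closed set'' into a horizontal strip of width $\le 2M$. The full preimage under $\pi$ of any nonempty subset of $\T^2$ is invariant under all integer translations, in particular under $T_2$; since $\tilde h^{\,k}$ commutes with $T_2$, its image is again $T_2$-invariant and therefore meets every horizontal strip --- it can never be contained in one. So the intended contradiction evaporates at exactly the point where you need it. The degenerate configuration you worry about (all the vertical $h$-drift carried on points whose $\tilde f$-orbits are horizontally inessential) is precisely the difficulty that the paper's set $E$ and the Atkinson-lemma rigidity are designed to rule out; your measure-theoretic setup (jointly invariant measures via Markov--Kakutani, extremal measures for $(b,0)$) does not by itself exclude it. As it stands the proposal is a plausible opening but not a proof.
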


The techniques developed for Theorem \ref{ncom0} actually gives us a stronger result. It concerns the rigidity of torus homeomorphisms as described by their deviations that is, 

\begin{equation*}
\de (\tilde f)= \sup_{\stackrel{n\in \Z}{x,y\in \R^2}} 
																		\left\|\left(\tilde f^{n}(x)-x\right)-\left(\tilde f^{n}(y)-y\right)\right\|.
\end{equation*}
\medskip

If $v\in\R^2_*$, we define \emph{deviation of $\ti f$ in the direction $v$}, by
\begin{equation*}
\de_v (\ti f)= \sup_{\stackrel{n\in \Z}{x,y\in \R^2}} 
\left|\Pr_v\left[\left(\ti f^{n}(x)-x\right)-\left(\ti f^{n}(y)-y\right)\right]\right|.
\end{equation*}
\medskip

If deviation of $\ti f$ is unbounded, we will write $\de_v (\ti f)=\infty$.
\medskip

We said $f$ is \emph{annular}, if some lift $\tilde f:\R^2\to \R^2$ of $f$ has uniformly bounded displacement in a rational direction; that is, there are 
$v\in \Z_*^2$ and $M>0$ such that 
\[ \left|\left\langle \tilde f^n(x)-x,v\right\rangle\right|\leqslant M,\] 
for every $x\in\R^2$ and $n\in\Z$. In that case we said $\ti f$ is $v^\bot$-\emph{annular}. 
\medskip

The technical result shows that in some cases, if $f$ and $g$ are commuting homeomorphisms, then if $f$ has deviation restrictions $g$ must also has deviation restrictions.

\begin{main}\label{d0}
Let $f,  h\in \hom_0(\T^2)$ commute and let $\tilde f,\tilde h$ be respective lifts.
If $\tilde f$ is $v$-annular, for some $v\in\Z^2_*$, then $\de_{v^{\bot}}(\tilde h)<\infty$ or $\de (\tilde f)<\infty$.
\end{main}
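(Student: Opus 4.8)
The plan is to normalize the direction, then split the argument into a short algebraic part and a longer geometric part. After conjugating $f$ and $h$ by the automorphism of $\T^2$ induced by a suitable matrix in $\mathrm{SL}(2,\Z)$ (which preserves commutation of lifts and changes $\de$, $\de_{v^{\bot}}$ and the annularity constant only by bounded factors), we may assume $v=e_1$, so that the hypothesis reads $|\pi_2(\ti f^n(x)-x)|\le M$ for all $x\in\R^2$ and $n\in\Z$, where $\pi_i$ denotes the $i$-th coordinate. Put $\phi_n(z)=\pi_2(\ti f^n(z)-z)$ and $D_m(z)=\pi_2(\ti h^m(z)-z)$: both are continuous and $\Z^2$-periodic, $|\phi_n|\le M$ for every $n$, $\de_{e_2}(\ti h)=\sup_{m\in\Z}\operatorname{osc}_{\T^2}(D_m)$, and, since the $\phi_n$ are uniformly bounded, $\de(\ti f)<\infty$ if and only if $\de_{e_1}(\ti f):=\sup_{n\in\Z}\operatorname{osc}_{\T^2}(F_n)<\infty$, where $F_n(z)=\pi_1(\ti f^n(z)-z)$.

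\emph{Algebraic part.} Since $h^{-1}fh=f$, the map $\ti h^{-1}\ti f\ti h$ is a lift of $f$, so $\ti h^{-1}\ti f\ti h=\ti f+(a,b)$ for some $(a,b)\in\Z^2$. As rotation sets are invariant under conjugation by lifts of homeomorphisms isotopic to the identity and $\rho(\ti f+(a,b))=\rho(\ti f)+(a,b)$, compactness of $\rho(\ti f)$ forces $(a,b)=(0,0)$; hence $\ti f\ti h=\ti h\ti f$ and $\ti f^n\ti h^m=\ti h^m\ti f^n$ for all $n,m\in\Z$. Projecting this identity onto $e_2$ and using only that $\ti f-\mathrm{id}$ and $\ti h-\mathrm{id}$ are $\Z^2$-periodic gives $D_m(x)-D_m(\ti f^n(x))=\phi_n(x)-\phi_n(\ti h^m(x))$ for all $n,m,x$, whence the key estimate $|D_m(x)-D_m(\ti f^n(x))|\le 2M$ for all $n,m\in\Z$ and $x\in\R^2$. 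Thus, for each fixed $m$, $D_m$ varies by at most $2M$ along every $\ti f$-orbit, hence (by continuity and $\Z^2$-periodicity) along the closure of every $f$-orbit in $\T^2$, and more generally along any $f$-invariant set restricted to which $f$ is chain transitive.

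\emph{Geometric part.} Assume now $\de(\ti f)=\infty$, i.e.\ $\de_{e_1}(\ti f)=\infty$, so $\ti f$ has unbounded horizontal deviations; we must deduce $\sup_m\operatorname{osc}(D_m)<\infty$. The idea is to exploit the structure of an $e_1$-annular homeomorphism: decompose $\T^2$ into the (at most countably many) components of the maximal $f$-invariant open set that is a disjoint union of essential periodic sub-annuli on which $f$ acts in an annular way, together with the complementary $f$-invariant continua. Since $h$ commutes with $f$, it permutes this decomposition and respects the essential/inessential dichotomy for invariant continua as well as $\mathrm{Fix}(f)$ and $\mathrm{Per}(f)$; replacing $h$ by an iterate (harmless, since finiteness of $\de_{e_2}$ is iterate-independent) we may assume $h$ fixes each ``large'' component. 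On an $f$- and $h$-invariant essential sub-annulus $A$ of vertical width $w(A)<1$, the lift $\ti h$ translates the $\Z$-family of lifts of $A$ by a fixed integer $k_A$, so $D_m=mk_A+O(w(A))$ there, hence $\operatorname{osc}(D_m|_A)\le 2$; on the remaining, ``strictly toral'', components, on which $\de_{e_1}(\ti f)=\infty$ makes $f$ chain transitive in the essential direction, the $2M$-almost-invariance of the previous step gives $\operatorname{osc}(D_m)\le 2M$ directly. Since $\de_{e_1}(\ti f)=\infty$, only finitely many components fail to lie in a single invariant strip of width $<1$; propagating the bound $\operatorname{osc}\le\max(2M,2)$ across their finitely many boundary continua, using continuity of $D_m$, yields a bound on $\operatorname{osc}(D_m)$ independent of $m$, i.e.\ $\de_{e_2}(\ti h)<\infty$.

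\emph{Main obstacle.} The geometric part is by far the hardest step. The delicate points are: producing a version of the annular/strictly toral decomposition of $f$ canonical enough to be automatically respected by any homeomorphism commuting with $f$ (this should reduce to Koropecki--Tal type results once one checks that $h$ preserves the essential/inessential character of $f$-invariant continua and the relevant minimal and chain-recurrent pieces); showing that unbounded horizontal deviation of an $e_1$-annular map really confines the ``wild'' dynamics to pieces that are chain transitive in the essential direction, so that the $2M$-estimate does bound $\operatorname{osc}(D_m)$ on them; checking that only finitely many components are vertically ``large'', so that the gluing of oscillation bounds does not accumulate; and handling the nested case, where a wild $e_1$-annular block sits inside another, by induction on the depth of the decomposition, using that $\de_{e_1}(\ti f)=\infty$ passes to the innermost non-annular block. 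The algebraic part, by contrast, is short and formal, and already yields that these lifts commute exactly, which is what makes the almost-invariance estimate available.
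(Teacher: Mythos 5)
Your algebraic part is correct and self-contained: any lifts of commuting $f,h$ commute exactly (the integer vector $(a,b)$ must vanish because $\rho(\tilde f)$ is compact and invariant under conjugation by a lift of a map isotopic to the identity), and projecting $\tilde f^n\tilde h^m=\tilde h^m\tilde f^n$ onto the annular direction does yield $|D_m(x)-D_m(\tilde f^n(x))|\leqslant 2M$. But this only says that $D_m$ is $2M$-quasi-invariant along individual $f$-orbits; to conclude $\de_{v^{\bot}}(\tilde h)<\infty$ you must connect an entire fundamental domain by such orbits (or their closures), and that is exactly where your argument stops being a proof. The geometric part is a programme, not an argument: you do not establish that the annular/strictly-toral decomposition you invoke exists in the form you need, that a commuting $h$ necessarily preserves it, that unbounded deviation of $\tilde f$ forces the non-annular pieces to be chain transitive in the essential direction (and chain transitivity, as opposed to genuine topological transitivity with a dense orbit, does not by itself upgrade the orbitwise $2M$ bound to an oscillation bound on the piece, since the errors along an $\epsilon$-chain accumulate with its length), that only finitely many components are vertically large, or that the induction on ``depth'' terminates. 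You flag all of these yourself under ``Main obstacle,'' which amounts to an admission that the proof is incomplete.

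For contrast, the paper argues in the opposite (logically equivalent) direction of the disjunction and with entirely different tools: assuming $\de_{v^{\bot}}(\tilde h)=\infty$, it uses a far-travelling iterate $\tilde h^n$ of an essential $\tilde f$-invariant strip $A$ to carve out an open, $\tilde f$-invariant, vertically periodic set $E$ whose connected components have uniformly bounded diameter and which meets every $\Z^2$-orbit (Lemma \ref{mau}); commutation enters precisely to make $\tilde h^n(A)$ $\tilde f$-invariant. Atkinson's Lemma, applied to an ergodic measure realizing an extreme point of $\rho(\tilde f)$, then produces a component $U$ with $\tilde f^{\overline n}(U)=T_2^{\overline m}(U)$, and a finite chain of translated components covering a fundamental domain propagates the resulting bound to all of $\R^2$, giving $\de(\tilde f)<\infty$ (Proposition \ref{d}). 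If you wish to salvage your route, the missing ingredient is a genuine proof that $\de(\tilde f)=\infty$ forces $f$ to be transitive on the pieces carrying the unbounded deviation; absent that, the geometric half must be replaced by a construction of the paper's type.
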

\medskip

A previous version of Theorem \ref{d0} for area preserving homeomorphisms was obtained by Benayon in his doctoral thesis \cite{mau}.\medskip

The paper is organized as follows: The next section formally introduces all the relevant objects and describes the necessary tools, and section \ref{dem} is devoted to the proofs of the main results.


\section{Preliminaries}\label{pre}

\subsection{Notations}

We will denote $\N_*$, $\Z_*$ and $\R_*$ the naturals, integers and reals numbers without zero, respectively.  
We denote the two-dimensional torus $\T^2=\R^2 / \Z^2$. Let $\hom_0(\T^2)$ be the space of homeomorphisms homotopic to the identity of 
$\T^2$, $f\in \hom_0(\T^2)$, $\tilde{f}:\R^{2}\to \R^{2}$ be any lift of $f$ and $\pi:\T^2\to \R^2$ the canonical projection.\medskip

We also denote by $\left\langle\: ,\right\rangle$ the usual scalar product in $\R^2$, $\Pr_v:\R^2\to\R$ the orthogonal projection given by $$\Pr_v(x)=\frac{\left\langle x ,\: v\right\rangle}{\left\|v\right\|}.$$
We shall denote in $\R^2$ the integer translations $T_1(z)=z+(1,0)$ and $T_2(z)=z+(0,1)$, and the projections $\Pr_1(x,y)=x$ and $\Pr_2(x,y)=y$.
If $v=(a,b)\in\R^2$, we write $v^{\bot}=(-b,a)$.\medskip

We write $\cv (A)$ for the hull convex of $A$. 


\subsection{Atkinson's Lemma}

Another result that will be useful in the next section is Atkinson's Lemma on ergodic theory. See \cite{a}.

\begin{pro}
Let $(X,\B,\mu)$ be a probability space and let $T:X\to X$ be an ergodic automorphism. Consider $\phi:X\to \R$ an integrable map such that 
$\int \phi \: d\mu=0$, then for every $B\in \B$ and every $\epsilon >0$, 
\begin{align*}
	\mu\left(\bigcup_{n\in\N}B\cap T^{-n}(B)\cap\left\{x\in X:\; \left|\sum_{i=0}^{n-1}\phi(T^i(x))\right|<\epsilon\right\}\right)=\mu(B).
\end{align*}
\end{pro}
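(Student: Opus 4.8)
The plan is to rephrase the statement as a conservativity (no‑wandering‑set) property of the $\R$-extension of $T$ by $\phi$, and then to feed in the hypothesis $\int\phi\,d\mu=0$. Write $S_n\phi:=\sum_{i=0}^{n-1}\phi\circ T^i$; only the terms with $n\geqslant1$ carry content (the $n=0$ term is all of $B$), and the union over $n\geqslant1$ of $B\cap T^{-n}(B)\cap\{\,|S_n\phi|<\epsilon\,\}$ is precisely $B\setminus A$, where $A$ is the \emph{bad set}, i.e.\ the set of $x\in B$ with $|S_n\phi(x)|\geqslant\epsilon$ for every $n\geqslant1$ such that $T^nx\in B$. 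Hence the assertion is exactly $\mu(A)=0$, and this is what I would prove.

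First I would introduce the skew product $T_\phi\colon X\times\R\to X\times\R$, $T_\phi(x,t)=(Tx,\,t+\phi(x))$, an invertible measurable map preserving the $\sigma$-finite (infinite) measure $\nu:=\mu\times\lambda$, $\lambda$ being Lebesgue measure on $\R$; invariance is immediate on product sets from translation invariance of $\lambda$ and $T$-invariance of $\mu$, and $T_\phi^n(x,t)=(T^nx,\,t+S_n\phi(x))$. Set $\hat A:=A\times[0,\epsilon/2)$. The definition of $A$ makes $\hat A$ a \emph{wandering set}: if $(x,t)\in\hat A$ and $T_\phi^n(x,t)\in\hat A$ for some $n\geqslant1$, then $T^nx\in A\subseteq B$ forces $|S_n\phi(x)|\geqslant\epsilon$, while $t$ and $t+S_n\phi(x)$ both lying in $[0,\epsilon/2)$ forces $|S_n\phi(x)|<\epsilon/2$ --- a contradiction. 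Invertibility of $T_\phi$ then upgrades $T_\phi^n(\hat A)\cap\hat A=\emptyset$ $(n\geqslant1)$ to pairwise disjointness of the whole forward orbit $\{T_\phi^n(\hat A)\}_{n\geqslant0}$.

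It remains to show $\mu(A)=0$, and this is where $\int\phi\,d\mu=0$ and ergodicity enter. Fix a small $\delta>0$. By Birkhoff's theorem $\tfrac1nS_n\phi\to\int\phi\,d\mu=0$ $\mu$-a.e., so $C(x):=\sup_{n\geqslant0}\big(|S_n\phi(x)|-\delta n\big)$ is finite $\mu$-a.e.; by continuity of measure from below there is $C<\infty$ with $\mu(A')\geqslant\tfrac12\mu(A)$, where $A':=\{\,x\in A:C(x)\leqslant C\,\}$, and then $|S_n\phi(x)|\leqslant C+\delta n$ for every $n\geqslant0$ and every $x\in A'$. Consequently, for $0\leqslant n<M$ the second coordinate of any point of $T_\phi^n\big(A'\times[0,\epsilon/2)\big)$ lies in $[-(C+\tfrac{\epsilon}{2}+\delta M),\,C+\tfrac{\epsilon}{2}+\delta M]$; these $M$ sets are pairwise disjoint, of total $\nu$-measure $M\cdot\tfrac{\epsilon}{2}\mu(A')$, and are contained in a slab of $\nu$-measure $2\big(C+\tfrac{\epsilon}{2}+\delta M\big)$, whence
\[
M\Big(\tfrac{\epsilon}{2}\mu(A')-2\delta\Big)\ \leqslant\ 2\Big(C+\tfrac{\epsilon}{2}\Big)\qquad\text{for all }M\geqslant1 .
\]
Letting $M\to\infty$ forces $\tfrac{\epsilon}{2}\mu(A')\leqslant2\delta$, hence $\mu(A)\leqslant 8\delta/\epsilon$; since $\delta>0$ was arbitrary, $\mu(A)=0$. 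The one step requiring care is this last volume count --- using the ergodic theorem to confine a positive‑measure piece of $A$ to a sublinearly growing tube, so that the linear‑in‑$M$ total volume of the disjoint tower outpaces the slab that must contain it; the rest is bookkeeping. (Equivalently, one may invoke the known fact that an $\R$-extension of an ergodic automorphism by an integrable zero‑mean cocycle is conservative and conclude from the absence of positive‑measure wandering sets.)
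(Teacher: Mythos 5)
Your argument is correct. Note, however, that the paper offers no proof of this proposition at all: it is quoted verbatim from Atkinson's paper \cite{a}, so there is no internal argument to compare yours against. What you have written is a complete, self-contained proof along the now-standard ``conservativity of the skew product'' line: the reduction of the statement to $\mu(A)=0$ for the bad set $A$ is exact (and you correctly dispose of the trivial $n=0$ term), the set $\hat A=A\times[0,\epsilon/2)$ is indeed wandering for $T_\phi$ by the argument you give, and the final volume count is sound --- the $M$ disjoint images each have $\nu$-measure $\tfrac{\epsilon}{2}\mu(A')$, the Birkhoff bound $|S_n\phi|\leqslant C+\delta n$ on $A'$ confines them to a slab of measure $2(C+\tfrac{\epsilon}{2}+\delta M)$, and letting $M\to\infty$ and then $\delta\to0$ kills $\mu(A)$. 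The one place a reader might pause is the choice of $C$ with $\mu(A')\geqslant\tfrac12\mu(A)$, which is harmless (trivial if $\mu(A)=0$, continuity from below otherwise), and the measurability of $A$ and of the images $T_\phi^n(\hat A)$, which follows since $T$ is an automorphism. Compared with simply citing Atkinson, your route buys a short elementary proof that avoids invoking Hopf's recurrence theorem or the general theorem that integrable zero-mean $\R$-extensions of ergodic automorphisms are conservative; it differs in presentation from Atkinson's original combinatorial argument but proves the same statement in full generality.
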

\medskip

\begin{cor}\label{atk}
Let $X$ be a separable metric space, $f:X\to X$ be a homeomorphism and $\mu$ an $f$-invariant ergodic nonatomic Borel probability measure. If 
$\phi\in \eL_1(\mu)$ is such that $\int \phi \: d\mu=0$, then for $\mu$-a.e. $x\in X$, there is an increasing sequence $(n_i)_{i\in\N}$ of integers such that
\begin{equation*}
  f^{n_i}(x)\to x\ \ \ \textit{and} \ \ \
  \sum_{k=0}^{n_i-1}(\phi_1\circ f^{k})(x)\to 0, \ \ \ \textit{as} \ \ \ i\to \infty.
\end{equation*} 
\end{cor}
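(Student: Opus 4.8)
The plan is to read off the conclusion from Atkinson's Lemma applied to carefully chosen Borel sets, the only real issue being to control \emph{simultaneously} the spatial return $f^n(x)\to x$ and the Birkhoff sums $S_n\phi(x):=\sum_{i=0}^{n-1}\phi(f^i x)$, which satisfy the cocycle identity $S_{a+b}\phi(x)=S_a\phi(x)+S_b\phi(f^a x)$. Since $X$ is separable metric, it is second countable; fix a countable basis $\{V_\ell\}_{\ell\in\N}$ of open sets. I claim it suffices to prove that for all $\ell,k,N\in\N$ the Borel set
\[
E_{\ell,k,N}:=\bigl\{x\in V_\ell:\ \text{for every }n\geq N,\ f^n x\notin V_\ell\ \text{ or }\ |S_n\phi(x)|\geq \tfrac1k\bigr\}
\]
is $\mu$-null. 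Granting this, a point $x$ outside the (countable, hence null) union of all these sets has, for every basic open $V_\ell\ni x$, every $k$ and every $N$, some $n\geq N$ with $f^n x\in V_\ell$ and $|S_n\phi(x)|<1/k$; i.e.\ infinitely many such $n$. Choosing basic open $V_{\ell(i)}$ with $x\in V_{\ell(i)}\subseteq B(x,1/i)$ and taking $k=i$, one then picks $n_i$ strictly increasing with $d(f^{n_i}x,x)<1/i$ and $|S_{n_i}\phi(x)|<1/i$, which is the assertion. (Informally this is the conservativity of the $\R$-extension $(x,t)\mapsto(fx,t+\phi(x))$ read along the measure-zero section $t=0$, which is exactly what Atkinson's Lemma sees.)

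The heart of the matter — and the step I expect to be the main obstacle — is the $\mu$-nullity of $E:=E_{\ell,k,N}$, because a naive Poincar\'e-type iteration degrades the Birkhoff bound: gluing two returns with error $<\delta$ each yields only a return with error $<2\delta$. The observation that saves the day is that, since every return time is $\geq 1$, reaching a return time $\geq N$ costs at most $N$ iterations, so a per-step error budget $\delta<\tfrac1{kN}$ is enough. Concretely, suppose $\mu(E)>0$ and apply Atkinson's Lemma to the ergodic automorphism $f$, to $B=E$, and to $\epsilon=\tfrac1{2kN}$: for $\mu$-a.e.\ $x\in E$ there is $m\geq 1$ with $f^m x\in E$ and $|S_m\phi(x)|<\tfrac1{2kN}$; let $G\subseteq E$ be a conull set of such $x$, and — using $f$-invariance of $\mu$, so that $\sum_{j\geq 1}\mu(f^{-j}(E\setminus G))=0$ — shrink $G$ further so that every forward $f$-iterate of a point of $G$ which lies in $E$ already lies in $G$. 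Pick $x\in G$ (possible since $\mu(E)>0$) and iterate $N$ times along its orbit: one gets $m_1,\dots,m_N\geq 1$ with $x^{(j)}:=f^{m_1+\cdots+m_j}(x)\in E$ and $|S_{m_j}\phi(x^{(j-1)})|<\tfrac1{2kN}$ for $j=1,\dots,N$. Set $n:=m_1+\cdots+m_N\geq N$. By the cocycle identity $S_n\phi(x)=\sum_{j=1}^N S_{m_j}\phi(x^{(j-1)})$, so $|S_n\phi(x)|<\tfrac1{2k}<\tfrac1k$, while $f^n x=x^{(N)}\in E\subseteq V_\ell$ and $n\geq N$ — contradicting $x\in E_{\ell,k,N}$. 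Hence $\mu(E)=0$.

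What remains is routine and I would only sketch it: the $E_{\ell,k,N}$ are Borel since $\phi$ is measurable and $f$ continuous; separable metric implies second countable; the "persistence under iteration" used above is immediate from $f$-invariance of $\mu$; and the final extraction of a strictly increasing $(n_i)$ is a diagonal choice over a shrinking neighbourhood basis of $x$ and over $k\to\infty$. (The non-atomicity hypothesis is not actually needed here; it only ensures the statement is non-vacuous when $\mu$ is supported on a periodic orbit.) In short, the whole proof turns on the remark that $N$ iterations always overshoot time $N$, so the accumulation of error can be beaten with the budget $\tfrac1{kN}$; granting that, everything is bookkeeping around Atkinson's Lemma.
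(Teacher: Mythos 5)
Your proof is correct. The paper states this corollary without proof (it only cites the reference for Atkinson's Lemma itself), and your derivation --- reducing to the $\mu$-nullity of the sets $E_{\ell,k,N}$ over a countable basis and then concatenating $N$ Atkinson returns, each with error budget $\tfrac{1}{2kN}$ and return time at least $1$ so that the total time exceeds $N$ while the accumulated Birkhoff sum stays below $\tfrac1k$ --- is exactly the standard way this corollary is obtained from Atkinson's Lemma (note only that the union in the lemma must be read over $n\geqslant 1$, as you implicitly do).
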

\medskip


\subsection{Rotation theory}
\label{rotset}

From now on, let $f\in \hom_0(\T^2)$ and let $\tilde{f}:\R^{2}\to \R^{2}$ be any lift of $f$.\medskip

The following definitions were introduced by Misiurewics and Ziemian in \cite{mz89}.

\begin{defi}
Let $x\in\T^2$. If
\begin{equation}\label{rot}
 \lim_{n\to\infty}\frac{\tilde{f}^n(\tilde{x})-\tilde{x}}{n}
\end{equation}
exist, for some $\tilde{x}\in\pi^{-1}(x)$, then the limit (1) is denoted by $\rho(\tilde{f},x)$ and call 
\textsl{the rotation vector of $x$ by $\tilde{f}$}. 
\end{defi}
\medskip

\begin{defi}
The point-wise rotation set of $\tilde{f}$ is:
 \[\rho_p(\tilde{f})=\bigcup_{x\in \T^2}\rho(\tilde{f},x)\]
\end{defi}

\begin{defi}
The \textsl{rotation set of $\tilde{f}$}, denoted by $\rho(\tilde{f})$, is the set of points $v\in\R^2$, such that there exist sequences
 $(n_i)\in \Z$ and $(x_i)\in \R^2$ with $n_i\to \infty$ as $i\to\infty$, and 
\[\lim_{i\to \infty} \frac{\tilde{f}^{n_i}(x_i)-x_i}{n_i}=v.\]
\end{defi}

Let $\phi:\T^2\to \R^2$ be the displacement function defined by $\phi(x)=\tilde f(\tilde x)-\tilde x$, where $\tilde x\in \pi^{-1}(x)$ and 
$\pi:\R^2\to \T^2$ is the natural projection.\medskip

Denote the space of all  $f$-invariant probability measures on $\T^2$ by $\eM (f)$ and the subset of ergodic measures by $\textit{M} _e(f)$.

\begin{defi}
If $\mu\in \eM (f)$, define its rotation vector by $\rho(\mu,f)=\int \phi\ d\mu$. 
Also the sets
\begin{align*}
	\rho_m(\ti f)=&\left\{\rho(\mu,\ti f); \mu\in \eM (f) \right\}\ and \\
	\rho_e(\ti f)=&\left\{\rho(\mu,\ti f); \mu\in \eM_e (f)\right\}.
\end{align*}
\end{defi}
\medskip

\begin{pro}\label{birkof}
If $\mu\in \eM_e(f)$, then $\rho(\mu, \ti f)=\rho(\tilde f, x)$, for $\mu$-a.e. $x\in\T^2$.
\end{pro}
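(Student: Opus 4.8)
The plan is to recognize this as a direct application of Birkhoff's Ergodic Theorem applied to the displacement cocycle. Recall that for $f \in \hom_0(\T^2)$ with lift $\tilde f$, the displacement $\phi(x) = \tilde f(\tilde x) - \tilde x$ is well defined on $\T^2$ (it does not depend on the choice of $\tilde x \in \pi^{-1}(x)$, since $\tilde f$ commutes with integer translations and $\tilde f(\tilde x + k) - (\tilde x + k) = \tilde f(\tilde x) - \tilde x$ for all $k \in \Z^2$), continuous, hence bounded and in particular $\mu$-integrable for any $\mu \in \eM(f)$. The key algebraic observation is the telescoping identity
\begin{equation*}
\tilde f^n(\tilde x) - \tilde x = \sum_{k=0}^{n-1} \phi(f^k(x)),
\end{equation*}
which one proves by induction on $n$ using that $\tilde f^{k+1}(\tilde x) - \tilde f^k(\tilde x) = \phi(f^k(x))$, the latter because $\tilde f^k(\tilde x)$ is a lift of $f^k(x)$.

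Next I would apply Birkhoff's Ergodic Theorem componentwise: writing $\phi = (\phi_1, \phi_2)$ with each $\phi_j \in \eL^1(\mu)$, since $\mu$ is ergodic the Birkhoff averages converge to the space average, so for $\mu$-a.e. $x \in \T^2$,
\begin{equation*}
\frac{1}{n} \sum_{k=0}^{n-1} \phi_j(f^k(x)) \longrightarrow \int \phi_j \, d\mu
\end{equation*}
as $n \to \infty$, for $j = 1, 2$. Taking the intersection of the two full-measure sets (one for each coordinate) and combining with the telescoping identity yields, for $\mu$-a.e. $x$,
\begin{equation*}
\lim_{n\to\infty} \frac{\tilde f^n(\tilde x) - \tilde x}{n} = \int \phi \, d\mu = \rho(\mu, \tilde f).
\end{equation*}
By definition of the rotation vector of a point, the left-hand limit existing and equalling this value means precisely that $\rho(\tilde f, x)$ is defined and equals $\rho(\mu, \tilde f)$, which is the claim.

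There is no serious obstacle here; the statement is essentially a repackaging of Birkhoff's theorem. The only points requiring a line of care are: (i) checking that $\phi$ descends to a well-defined bounded function on the torus so that the hypotheses of Birkhoff apply, and (ii) noting that the choice of lift $\tilde x \in \pi^{-1}(x)$ is immaterial — a different choice $\tilde x + k$ changes $\tilde f^n(\tilde x) - \tilde x$ by $0$, since $\tilde f^n$ also commutes with translation by $k \in \Z^2$ — so that $\rho(\tilde f, x)$ is genuinely a property of the point $x$ and the lift $\tilde f$, consistent with the Definition. I would present the telescoping identity as a one-line induction, invoke Birkhoff once per coordinate, and conclude.
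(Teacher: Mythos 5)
Your proposal is correct and follows essentially the same route as the paper: apply Birkhoff's ergodic theorem to the displacement function $\phi$ and use the telescoping identity $\tilde f^n(\tilde x)-\tilde x=\sum_{k=0}^{n-1}\phi(f^k(x))$ to identify the Birkhoff average with the rotation vector. You simply spell out details (well-definedness of $\phi$, independence of the choice of lift point, componentwise application) that the paper leaves implicit.
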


\begin{proof}
	If $\mu\in M_e(f)$, by the ergodic theorem follows that $\mu$-a.e. $x\in T^2$,
	
	\[\lim_{n\to \infty}\frac{1}{n}\sum_{k=0}^{n-1}{\phi\circ f^k(x)}=\int \phi\ d\mu.\] 
	
	Therefore for $\mu$-a.e. $x\in\T^2$ and every $\tilde x\in \pi^{-1}(x)$, 
	\begin{equation*} 
	\rho(\tilde{f},x)=\lim_{n\to\infty}\frac{\tilde{f}^n(\tilde{x})-\tilde{x}}{n}=\rho(\mu,f).
	\end{equation*}
\end{proof}
\medskip

\begin{teo}[\cite{mz89}]\label{convexo}
It holds
	\[\rho(\tilde f)=\cv( \rho_p(\tilde f))= \cv (\rho_e(\tilde f))=\rho_m(\ti f).\]
\end{teo}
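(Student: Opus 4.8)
The plan is to prove the chain of equalities in stages. Let $\phi\colon\T^2\to\R^2$ be the displacement; since $\tilde f$ commutes with the translations $T_1,T_2$, $\phi$ is well defined and continuous on the compact torus, hence bounded by some $C>0$, and $\tilde f^{\,n}(\tilde x)-\tilde x=\sum_{k=0}^{n-1}\phi(f^k(\pi\tilde x))$ for $n\ge 0$. First I would record the easy inclusions: taking the constant sequence $x_i=\tilde x$, $n_i=i$ shows $\rho_p(\tilde f)\subseteq\rho(\tilde f)$, and Proposition \ref{birkof} gives $\rho_e(\tilde f)\subseteq\rho_p(\tilde f)$. From $\|\tilde f^{\,n}(\tilde x)-\tilde x\|\le |n|\,C$ every difference quotient lies in the closed ball of radius $C$, so $\rho(\tilde f)$ is bounded; a diagonal argument on the defining double sequences shows it is also closed, hence $\rho(\tilde f)$ is compact (one also gets connectedness, not needed here, from $\|g_{n+1}-g_n\circ f\|_\infty=O(1/n)$ with $g_n=\frac1n\sum_{k=0}^{n-1}\phi\circ f^k$).

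Next I would show $\rho(\tilde f)\subseteq\rho_m(\tilde f)$ and that $\rho_m(\tilde f)$ is compact and convex. The second assertion is soft: $\eM(f)$ is convex and weak-$*$ compact, and $\mu\mapsto\int\phi\,d\mu$ is affine and weak-$*$ continuous because $\phi$ is bounded and continuous, so $\rho_m(\tilde f)$, being its image, is a compact convex subset of $\R^2$. For the inclusion, given $v=\lim_i\big(\tilde f^{\,n_i}(x_i)-x_i\big)/n_i$ with $n_i\to\infty$, form the empirical measures $\mu_i=\frac1{n_i}\sum_{k=0}^{n_i-1}\delta_{f^k(\pi x_i)}$; then $\|f_*\mu_i-\mu_i\|\le 2/n_i\to 0$, so any weak-$*$ subsequential limit $\mu$ is $f$-invariant, while $\int\phi\,d\mu_i=\big(\tilde f^{\,n_i}(x_i)-x_i\big)/n_i\to v$ and also $\int\phi\,d\mu_i\to\int\phi\,d\mu$ by weak-$*$ convergence, whence $v=\rho(\mu,\tilde f)\in\rho_m(\tilde f)$.

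Then I would identify $\rho_m(\tilde f)=\cv(\rho_e(\tilde f))$. Since $L:=(\mu\mapsto\int\phi\,d\mu)$ is a continuous affine surjection of the compact convex set $\eM(f)$ onto $\rho_m(\tilde f)$, every extreme point $v$ of $\rho_m(\tilde f)$ is hit by an extreme point of $\eM(f)$: the fiber $L^{-1}(v)\cap\eM(f)$ is a nonempty compact convex face of $\eM(f)$, hence has an extreme point, which is then extreme in $\eM(f)$, i.e. ergodic; thus $\mathrm{ext}\,\rho_m(\tilde f)\subseteq\rho_e(\tilde f)$. As a compact convex subset of $\R^2$ equals the convex hull of its extreme points, $\rho_m(\tilde f)=\cv(\mathrm{ext}\,\rho_m(\tilde f))\subseteq\cv(\rho_e(\tilde f))\subseteq\rho_m(\tilde f)$. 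Combining the stages so far, $\rho_e\subseteq\rho_p\subseteq\rho(\tilde f)\subseteq\rho_m=\cv(\rho_e)$, and taking convex hulls, $\cv(\rho_e)=\cv(\rho_p)=\cv(\rho(\tilde f))=\rho_m(\tilde f)$. Hence the theorem reduces to the convexity of $\rho(\tilde f)$, equivalently to $\rho_m(\tilde f)\subseteq\rho(\tilde f)$.

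The convexity of $\rho(\tilde f)$ is the main obstacle; everything above is soft. The strategy for it is as follows. By Proposition \ref{birkof} and the ergodic decomposition, together with Carathéodory's theorem and a density argument, it suffices to show that any rational convex combination $\sum_{j=1}^N t_j v_j$ of pointwise rotation vectors $v_j=\rho(\tilde f,y_j)$ lies in $\rho(\tilde f)$; one then builds, for each large $n$, a single base point whose orbit segment of length $\approx n$ has average displacement $\approx\sum_j t_j v_j$. The delicate point is that one cannot naively concatenate the orbits of the $y_j$, since the forward orbit of a point goes where it goes; instead one passes to subsequences along which the relevant iterates of the $y_j$ converge on the compact torus, and bridges consecutive blocks by short transit segments whose total contribution is $o(n)$, using that $\T^2$ is connected so that the amount of winding accumulated during transit can be prescribed. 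This shadowing/concatenation argument is exactly the content of the Misiurewicz–Ziemian proof, and we invoke \cite{mz89} for it.
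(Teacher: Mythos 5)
The paper gives no proof of this theorem---it is quoted directly from \cite{mz89}---so there is nothing internal to compare against, and your reduction is correct and standard. All the soft steps check out (compactness and convexity of $\rho_m(\tilde f)$, the empirical-measure argument for $\rho(\tilde f)\subseteq\rho_m(\tilde f)$, the face argument giving $\mathrm{ext}\,\rho_m(\tilde f)\subseteq\rho_e(\tilde f)$, and Minkowski's theorem in the plane), and you correctly isolate the one genuinely hard ingredient, the convexity of $\rho(\tilde f)$ itself, which you defer---as the paper implicitly does for the whole statement---to \cite{mz89}.
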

\medskip


\subsection{Bounded deviation}

In this part we study the relationship between deviation and rotation set concepts.

\begin{lem}\label{proye}
If $\de_u(\ti f)$ is bounded, for some $u\in\R^2_*$, then there is $a\in\R$ such that, $\Pr_u(\rho(\ti f))=\left\{a\right\}$. 
\end{lem}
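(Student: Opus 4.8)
The plan is to translate the statement into a statement about rotation vectors of invariant measures using Theorem \ref{convexo}, which says $\rho(\ti f)=\rho_m(\ti f)$. So it suffices to show that the quantity $\Pr_u(\rho(\mu,\ti f))=\Pr_u\left(\int\phi\,d\mu\right)$ takes a single value independent of $\mu\in\eM(f)$. Since $\Pr_u$ is linear, $\Pr_u\left(\int\phi\,d\mu\right)=\int \Pr_u\circ\phi\,d\mu$, so I want to show the integral of the scalar function $\psi:=\Pr_u\circ\phi:\T^2\to\R$ is the same for every $f$-invariant measure.

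First I would record the telescoping identity $\Pr_u\!\left(\ti f^n(\ti x)-\ti x\right)=\sum_{k=0}^{n-1}\psi(f^k(x))$, which holds because $\phi$ is a cocycle. Next, fix two invariant ergodic measures $\mu_1,\mu_2$ with rotation vectors $v_1,v_2$; by Proposition \ref{birkof} and the Birkhoff ergodic theorem, for $\mu_j$-a.e.\ $x$ we have $\frac1n\sum_{k=0}^{n-1}\psi(f^k(x))\to\Pr_u(v_j)$. Pick such points $x_1$ (for $\mu_1$) and $x_2$ (for $\mu_2$) and lifts $\ti x_1,\ti x_2$. Then for large $n$,
\[
\Pr_u\!\left(\ti f^n(\ti x_1)-\ti x_1\right)-\Pr_u\!\left(\ti f^n(\ti x_2)-\ti x_2\right)\approx n\bigl(\Pr_u(v_1)-\Pr_u(v_2)\bigr).
\]
But the left side equals $\Pr_u\!\left[\left(\ti f^n(\ti x_1)-\ti x_1\right)-\left(\ti f^n(\ti x_2)-\ti x_2\right)\right]$, whose absolute value is bounded by $\de_u(\ti f)<\infty$ for all $n$. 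Letting $n\to\infty$ forces $\Pr_u(v_1)=\Pr_u(v_2)$. Since $\rho_e(\ti f)$ is nonempty (it contains at least one rotation vector, e.g.\ for any ergodic measure, which exists by compactness) this common value is the desired $a$, and by Theorem \ref{convexo} $\rho(\ti f)=\cv(\rho_e(\ti f))$ lies in the line $\Pr_u^{-1}(a)$, hence $\Pr_u(\rho(\ti f))=\{a\}$.

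One subtlety to handle cleanly: $\Pr_u$ as defined in the paper carries the normalizing factor $1/\|u\|$, but this is harmless since it is a nonzero constant; the cocycle identity and all the estimates go through with $\psi=\langle\phi,u\rangle/\|u\|$. I expect the main (though modest) obstacle to be purely bookkeeping: making sure the displacements are evaluated at genuine lifts of the Birkhoff-generic points and that the "$\approx$" is made rigorous by writing $\Pr_u(\ti f^n(\ti x_j)-\ti x_j)=n\Pr_u(v_j)+o(n)$, so that subtracting gives $n|\Pr_u(v_1)-\Pr_u(v_2)|\le \de_u(\ti f)+o(n)$, which is impossible as $n\to\infty$ unless the coefficient of $n$ vanishes. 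No deeper idea seems to be needed; the bounded-deviation hypothesis does exactly the work of collapsing the projection of the rotation set to a point.
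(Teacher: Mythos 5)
Your argument is correct and is essentially the paper's proof: the paper likewise fixes one point $x_0$ with a well-defined rotation vector (coming from an ergodic measure), uses the bound $\de_u(\ti f)<\infty$ divided by $n$ to force $\Pr_u\bigl(\tfrac{\ti f^n(y)-y}{n}\bigr)\to a$ for all $y$, and then passes to $\rho(\ti f)$ via Theorem \ref{convexo}. Your route through pairs of ergodic measures rather than a single reference point is only a cosmetic variation.
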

\medskip

In that case, $\rho (\tilde f)\subset L(u^{\bot},a):=\left\{tu^{\bot}+au;\;t\in\R\right\}$.\medskip

\begin{proof}
From definition there is $M>0$, such that for all $x, y\in\R^2$ and $n\in\Z$,
\begin{equation}\label{desv1}
		\left|\Pr_u\left[\left(\tilde f^{n}(x)-x\right)-\left(\tilde f^{n}(y)-y\right)\right]\right|\leqslant M. 
\end{equation}

Set $p=\rho(\ti f, x_0)$ for some $x_0\in\T^2$, which exist since $\eM_e(f)$ is not empty, with $\Pr_u(p)=a\in\R$. By $(\ref{desv1})$ follows that 
$$\displaystyle \lim_{n\to\infty}\Pr_u\frac{\tilde f^{n}(y)-y}{n}=a,$$ for every $y\in\R^2$. Therefore 
$\left\{a\right\}=\Pr_u(\rho_p(\ti f))=\Pr_u(\rho(\ti f))$. 
\end{proof}
\medskip

\begin{ob}\label{int}
It is an elementary exercise to show that if $\tilde f$ is $v$-annular, for some $v\in\Z_*^2$, then 
$\rho (\tilde f)\subset L(v,a):=\left\{tv+av^{\bot};\;t\in\R\right\}$.
\end{ob}
\medskip

The following result shows that the converse to Remark \ref{int} is true in several case.

\begin{teo}[\cite{pa}]\label{pa}
If $\rho(\tilde f)$ is a segment with rational slope containing rational points, then $f^k$ is annular, for some $k\in\N$.
\end{teo}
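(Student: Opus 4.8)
The plan is to normalize the situation by passing to a power of $f$ and correcting the lift, reducing to the case of a horizontal segment through the origin, and then to prove that the resulting map has uniformly bounded displacement in the vertical direction.

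\emph{Normalization.} Write $J\subset L$ with $L$ a line of rational slope through a rational point $r=(a/q,b/q)$, $a,b\in\Z$, $q\in\N$. Recall $\rho(\tilde f^{q})=q\,\rho(\tilde f)=qJ$, which lies on $qL$, and $qL$ contains the integer point $(a,b)$. Composing $\tilde f^{q}$ with the integer translation by $-(a,b)$ gives a lift $\tilde g$ of $f^{q}$ whose rotation set is a segment contained in a line $\R w$ with $w\in\Z^{2}_{*}$ primitive, and with $0\in\rho(\tilde g)$ because $r\in L$. Conjugating $f^{q}$ by an element of $SL(2,\Z)$ — which preserves the property of being annular, since the transpose of such a matrix maps $\Z^{2}_{*}$ to $\Z^{2}_{*}$ — I may assume $w=(1,0)$, so $\rho(\tilde g)\subset\R\times\{0\}$ and $0\in\rho(\tilde g)$. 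As annularity of $g$ is equivalent to annularity of $f^{q}$, it suffices to prove that $g$ is annular, which gives the theorem with $k=q$.

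\emph{The crux: bounded vertical deviation.} I claim $\de_{(0,1)}(\tilde g)<\infty$, and this is the heart of the matter. Suppose not. A lift of a torus homeomorphism has uniformly bounded one-step displacement, so $\|\tilde g^{n}(z)-z\|$ grows at most linearly in $n$; hence unbounded vertical deviation yields $z_{i},w_{i}\in\R^{2}$ and $n_{i}\to\infty$ with $\bigl|\Pr_2\bigl(\tilde g^{n_{i}}(z_{i})-z_{i}\bigr)-\Pr_2\bigl(\tilde g^{n_{i}}(w_{i})-w_{i}\bigr)\bigr|\to\infty$. Since $\rho(\tilde g)\subset\R\times\{0\}$, Theorem \ref{convexo} gives $\int\Pr_2\phi\,d\mu=0$ for every $\mu\in\eM(g)$, so the vertical displacement is uniformly sublinear, $\sup_{z}|\Pr_2(\tilde g^{n}(z)-z)|=o(n)$; the task is therefore to upgrade an $o(n)$ bound to an $O(1)$ bound, which cannot be done by soft cocycle arguments alone and must use the two-dimensional topology. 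Here I would invoke the Brouwer--Le Calvez equivariant foliation / forcing machinery (transverse foliations for a maximal identity isotopy of $g$, in the style of Le Calvez--Tal, together with the annular-reduction techniques of Koropecki--Tal): arbitrarily long orbit segments with arbitrarily large vertical rise, combined with the recurrence of $g$ and the fixed-point information carried by $0\in\rho(\tilde g)$, force either a $g$-invariant essential subsurface on which $\tilde g$ realizes a nondegenerate rotation interval transverse to $\R\times\{0\}$, or a rotational horseshoe producing rotation vectors with nonzero second coordinate — either outcome contradicting $\rho(\tilde g)\subset\R\times\{0\}$. A more hands-on route, closer to D\'avalos's original, tracks the curves $\tilde g^{n}(\R\times\{0\})$ and shows that escape from every horizontal strip $\R\times[-M,M]$, spliced with an Atkinson-type near-return (Corollary \ref{atk} applied to $\Pr_2\phi$), would build a pseudo-orbit of net positive vertical drift, again contradicting $\int\Pr_2\phi\,d\mu=0$. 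This step is the main obstacle.

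\emph{From bounded deviation to bounded displacement.} Write $S_{n}(z)=\Pr_2(\tilde g^{n}(z)-z)$, which is the Birkhoff sum of $\Pr_2\phi$ along the corresponding orbit in $\T^{2}$. By the normalization and Theorem \ref{convexo}, $\int S_{n}\,d\mu=n\int\Pr_2\phi\,d\mu=0$ for every $\mu\in\eM(g)$ and every $n$. By the previous step there is $M>0$ with $|S_{n}(z)-S_{n}(w)|\le M$ for all $z,w\in\R^{2}$ and $n\in\Z$; fixing any $\mu\in\eM(g)$ and integrating this inequality in $w$ gives $|S_{n}(z)|=\bigl|S_{n}(z)-\int S_{n}\,d\mu\bigr|\le\int|S_{n}(z)-S_{n}|\,d\mu\le M$ for all $z$ and all $n$. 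Thus $|\langle\tilde g^{n}(z)-z,(0,1)\rangle|\le M$, so $g$ — and hence $f^{q}$ — is annular, proving the theorem with $k=q$.
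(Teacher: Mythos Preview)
This theorem is not proved in the paper at all: it is quoted verbatim from \cite{pa} (D\'avalos) and used as a black box in the proof of Theorem~A. So there is no ``paper's own proof'' to compare against; the relevant question is whether your sketch stands on its own.

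It does not. Your normalization paragraph is fine, and the final paragraph correctly shows that bounded vertical \emph{deviation} plus $\int\Pr_2\phi\,d\mu=0$ yields bounded vertical \emph{displacement}. But the middle step --- proving $\de_{(0,1)}(\tilde g)<\infty$ --- is the entire content of the theorem, and you have not proved it. You yourself write ``This step is the main obstacle'' and then offer two alternatives: invoke Le~Calvez--Tal forcing / Koropecki--Tal annular reduction, or ``track the curves $\tilde g^{n}(\R\times\{0\})$'' in the style of D\'avalos. Neither is carried out. The first option is anachronistic relative to \cite{pa} and, more importantly, the heuristic you give (``arbitrarily long orbit segments with arbitrarily large vertical rise \dots\ force a rotational horseshoe'') is not an argument: horseshoes require coexisting orbits with \emph{different} linear drift, and you have only sublinear vertical excursions together with $\Pr_2\rho(\tilde g)=\{0\}$. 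The second option is precisely the hard theorem you are trying to prove; citing Atkinson's lemma does not bridge the gap, since recurrence of a single Birkhoff sum says nothing about uniform bounds over all points.

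In short, your write-up is an accurate outline of why the result is nontrivial and of the reduction steps surrounding it, but the core boundedness statement is asserted, not established. If you want a self-contained argument you must actually reproduce the D\'avalos machinery (or the later forcing-theoretic proofs), which is well beyond a paragraph.
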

\medskip

\begin{pro}\label{wanul}
If $\rho(\ti h)$ is a nontrivial segment, contained in the straight line $$L(w,b)=\left\{tw+bw^{\bot};\;t\in\R\right\},$$ where 
$b=\Pr_{w^{\bot}}(\rho(\ti h))$ and $w\in\R^2$, then $\de_{v^{\bot}}(\ti h)$ is unbounded, for every $v\in\R^2$ nonparallel to $w$.
\end{pro}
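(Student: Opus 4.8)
The plan is to argue by contradiction: suppose $\de_{v^\bot}(\tilde h)$ is bounded for some $v\in\R^2$ nonparallel to $w$, and derive that $\rho(\tilde h)$ must then be a single point, contradicting that it is a nontrivial segment. The engine is Lemma \ref{proye} together with the description of rotation sets via invariant measures in Theorem \ref{convexo}.

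First I would apply Lemma \ref{proye} with $u=v^\bot$. Since by hypothesis $\de_{v^\bot}(\tilde h)<\infty$, the lemma gives a constant $a\in\R$ with $\Pr_{v^\bot}(\rho(\tilde h))=\{a\}$, so that $\rho(\tilde h)\subset L(v,a)=\{tv+av^\bot:t\in\R\}$, the line through $av^\bot$ in direction $v$. On the other hand, the standing hypothesis of the proposition says $\rho(\tilde h)\subset L(w,b)=\{tw+bw^\bot:t\in\R\}$, the line through $bw^\bot$ in direction $w$. Now comes the key geometric observation: $v$ and $w$ are nonparallel, so the two lines $L(v,a)$ and $L(w,b)$ have distinct directions and therefore intersect in at most one point. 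Hence $\rho(\tilde h)$ is contained in a one-point set (or is empty), so $\rho(\tilde h)$ is trivial — contradicting that it is a nontrivial segment.

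A small point to nail down is that $\rho(\tilde h)$ is nonempty, which is needed both to invoke Lemma \ref{proye} cleanly (its proof uses $\eM_e(h)\neq\emptyset$) and to conclude the intersection is exactly one point rather than possibly empty; this is standard since $\T^2$ is compact so $h$ admits an invariant probability measure, and by Theorem \ref{convexo} the rotation set is then nonempty (in fact compact and convex). I do not expect any serious obstacle here: the content is entirely the elementary linear-algebra fact that two non-parallel affine lines in $\R^2$ meet in at most one point, combined with the already-established Lemma \ref{proye}. The only thing to be careful about is the degenerate bookkeeping — making sure $v,w\in\R^2_*$ so that $v^\bot$, $w^\bot$, and the projections $\Pr_{v^\bot}$, $\Pr_{w^\bot}$ are well defined, and that "nonparallel" is used in the strong sense that neither is a scalar multiple of the other — but these are exactly the hypotheses as stated.
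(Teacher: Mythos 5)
Your proposal is correct and follows essentially the same route as the paper's proof: contradiction via Lemma \ref{proye} applied in the direction $v^{\bot}$, giving $\rho(\ti h)\subset L(v,a)\cap L(w,b)$, which is at most a point since $v$ and $w$ are nonparallel. Your extra remark on nonemptiness of $\rho(\ti h)$ is a harmless (and correct) tidying of a detail the paper leaves implicit.
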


\begin{proof}
Suppose by contradiction that there is $M>0$ such that, $\de_{v^{\bot}}(\ti h)\leqslant M$. We know because of the Lemma \ref{proye}, that  
$\rho(\ti h)$ is a subset of $$L(v,a)=\left\{tv+av^{\bot};\;t\in\R\right\},$$ where $a=\Pr_{v^{\bot}}(p)$, for every 
$p\in\rho(\ti h)$. So $\rho(\ti h)\subset [L(v,a)\cap L(w,b)]$. As $v$ and $w$ are not parallels, we can deduce that	
$\rho(\ti h)=\left\{p\right\}$. Absurd! 
\end{proof}
\medskip


\subsection{Annularity and commuting homeomorphisms}

Let $f, h\in \hom_0(\T^2)$ commute. In \cite{pk} was shown there exist lifts of $f$ and $h$ that also commute. Let $\ti f,\ti h$ be respective commuting lifts.
\medskip

The next lemma is contained in (\cite{mau}, Proposition 3.1).  We include the proof here for the sake of completeness.

\begin{lem}\label{mau}
If $\ti f$ is $(0,1)$-annular and $\de_{(0,1)^{\bot}}(\ti h)$ is unbounded then there exist a set $E$, such that

\begin{itemize}
\item[i.)] The set $E$ is open, $\ti f$-invariant and $T_2(E)=E$.
\item[ii.)] The connected components of $E$ have uniformly bounded diameter. 
\item[iii.)] For every $z\in \R^2$, there is $u\in \Z^2$ such that $(z+u)\in E$, that is, $\pi(E)=\T^2$.
\end{itemize}

\end{lem}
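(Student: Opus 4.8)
The plan is to encode the hypothesis in a single almost-invariant ``drift function'' and then to cut the plane into $\ti f$-invariant bounded cells built from its level sets. Unwinding the definitions, ``$\ti f$ is $(0,1)$-annular'' furnishes $M>0$ with
\[
\bigl|\Pr_1\!\bigl(\ti f^{\,n}(x)-x\bigr)\bigr|\le M\qquad(x\in\R^2,\ n\in\Z),
\]
one $M$ bounding the horizontal displacement of all iterates, while $\de_{(0,1)^\bot}(\ti h)=\infty$ means the horizontal deviation of $\ti h$ is unbounded. For $n\in\Z$ set $\Phi_n(x)=\Pr_1\!\bigl(\ti h^{\,n}(x)-x\bigr)$. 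Then $\Phi_n$ is continuous, invariant under $T_1$ and $T_2$ (hence bounded and descending to $\T^2$), and, using $\ti f\,\ti h=\ti h\,\ti f$ together with the displacement bound, a short computation gives
\[
\bigl|\Phi_n(\ti f(x))-\Phi_n(x)\bigr|\le 2M\qquad(x\in\R^2,\ n\in\Z);
\]
that is, $\Phi_n$ is \emph{almost $\ti f$-invariant} with a slack independent of $n$. Since $\de_{(0,1)^\bot}(\ti h)=\sup_n\bigl(\max_{\R^2}\Phi_n-\min_{\R^2}\Phi_n\bigr)$, the hypothesis lets me fix $n$ for which the oscillation $R$ of $\Phi_n$ is as large as I wish compared with $M$.

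Working in the open annulus $\mathcal A=\R^2/\langle T_2\rangle$ — on which $\ti f$ induces a homeomorphism $\hat f$ with uniformly bounded displacement in the noncompact direction and $\Phi_n$ induces a bounded continuous $\hat\Phi_n$ — the idea is to treat suitable level sets of $\hat\Phi_n$ as ``barriers''. Because $R\gg M$ I can pick levels $c_0<c_1<\dots<c_k$ spanning the range of $\hat\Phi_n$ with consecutive gaps far larger than $M$; the estimate above says $\hat f$ maps the barrier $\{\hat\Phi_n=c_i\}$ into the thin collar $\{|\hat\Phi_n-c_i|\le 2M\}$, and these collars are pairwise disjoint. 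From this family of \emph{almost} $\hat f$-invariant separating barriers I would assemble a genuinely $\hat f$-invariant closed ``net'' $F\subset\mathcal A$ contained in the union of the collars, take $\widehat E$ to be the union of the connected components of $\mathcal A\setminus F$ whose preimages in $\R^2$ are bounded, and let $E\subset\R^2$ be the full preimage of $\widehat E$ together with finitely many $T_1$-translates. Granting such an $F$: $E$ is open and $T_2$-invariant by construction and $\ti f$-invariant because $F$ is $\hat f$-invariant, giving (i); the chosen components, being the bounded complementary cells of the net $F$, have uniformly bounded diameter after lifting (this is part of what the construction of $F$ must ensure), giving (ii); and because the collars are thin and widely spaced they occupy only a small part of each period of $\mathcal A$, so that adjoining enough $T_1$-translates of the chosen components reaches every $\Z^2$-coset, giving (iii).

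The one genuinely hard step is the assembly of the honest invariant net $F$ from the almost-invariant barriers. The obvious fixes do not work: directly intersecting the forward and backward $\hat f$-iterates of a barrier can collapse to the empty set, since the $2M$-slack accumulates and orbits may drift from one collar into the next; and passing to the honest invariant envelopes $\sup_{j\in\Z}\hat\Phi_n\circ\hat f^{\,j}$ and $\inf_{j\in\Z}\hat\Phi_n\circ\hat f^{\,j}$ — which are semicontinuous and genuinely invariant — need not retain enough oscillation to keep the chosen levels separated. One must instead build $F$ by a careful exhaustion balancing two opposing requirements: $F$ should be sparse enough that $\mathcal A\setminus F$ still projects onto $\T^2$, yet rigid and spread out enough that its complementary cells lift to sets of uniformly bounded diameter. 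These are compatible precisely because $R$ may be taken enormous relative to $M$, and turning this balance into a proof is the heart of the matter; here I would follow the construction in \cite{mau}.
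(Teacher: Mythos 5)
There is a genuine gap, and you have named it yourself: the entire content of the lemma is the construction of the invariant set, and your proposal stops exactly there, saying you ``would follow the construction in \cite{mau}.'' Deferring the heart of the argument to the reference is not a proof. Worse, the route you set up does not lead to that construction: the obstruction you correctly identify --- that almost-$\ti f$-invariant level sets of $\Phi_n(x)=\Pr_1(\ti h^{\,n}(x)-x)$ cannot in general be upgraded to genuinely invariant barriers, because the $2M$-slack accumulates along orbits and the invariant envelopes $\sup_j\Phi_n\circ\ti f^{\,j}$ may lose all oscillation --- is a real obstruction, not a technicality. Your preliminary computations ($\Phi_n$ is $\Z^2$-periodic and $|\Phi_n(\ti f(x))-\Phi_n(x)|\le 2M$) are correct but they are not the right starting point.

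The paper's proof avoids almost-invariance entirely by building \emph{exactly} invariant sets from the outset. First, the vertical barrier: $B=\overline{\bigcup_{n\in\Z}\ti f^{\,n}(H)}$ with $H$ a half-plane is $\ti f$- and $T_2$-invariant by construction, and annularity confines it to $\{x\ge -M\}$; the connected component $A$ of $B\setminus T_1^k(\mathrm{int}(B))$ containing a suitable vertical strip is then an $\ti f$-invariant, $T_2$-invariant set of bounded horizontal width. Second, the horizontal barrier: commutativity is used not to make a function almost invariant but to transport exact invariance --- since $\ti f(A)=A$ and $\ti f\ti h=\ti h\ti f$, the set $\ti h^{\,n}(A)$ is again $\ti f$-invariant, and the unbounded horizontal deviation of $\ti h$ is used only to choose $n$ so that $\ti h^{\,n}(A)$ stretches from $A$ to $T_1^l(A)$. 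The set $E=[T_1^k(\mathrm{int}(B))\setminus T_1^l(B)]\setminus\ti h^{\,n}(A)$ is then open and invariant for free; its components are horizontally bounded by the strip and vertically bounded by the $T_2$-translates of a compact path in $\ti h^{\,n}(A)$, and property (iii) follows from a counting argument against the bounded width of $A$. If you want to complete your write-up, you should abandon the level-set scheme and carry out this direct construction (or an equivalent one); as it stands the proposal establishes none of (i)--(iii).
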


\begin{proof}
$i.)$ Denote by   \[H=\left\{(x,y)\in\R^2;\: x\geqslant 0\right\}.\] 
the closed half plane. And define the set  
$$B=\overline{\bigcup_{n\in\Z}\tilde f^n(H)}.$$

Then, $\tilde f(B)=B$ and $T_2(B)=B$. Since $\tilde f$ is $(0,1)$-\emph{annular} there is $M>0$ such that for every $z\in\R^2$,
\begin{displaymath}
\left|\Pr_1[\tilde f^n(z)-z]\right|\leqslant M.
\end{displaymath}

Thus, $$B\subset\left\{(x,y)\in\R^2;\; x\geqslant -M \right\}.$$ 

Denote by $\Delta=\sup_{z\in\R^2}\left\|\tilde f(z)-z\right\|$ and $A^\prime=B\setminus\left[T^k_1(int(B))\right],$ where $k>2M+\Delta +1$. 
Note that $\ti f(A^\prime)=A^\prime$.\medskip

Hence the set
$$A^{\prime\prime}=\left\{(x,y)\in\R^2;\: M\leqslant x\leqslant M + \Delta + 1\right\}\subset A^\prime.$$

Let $A$ be a connected component of $A^\prime$, such that $A^{\prime\prime}\subset A$. We can see that $\tilde f(A)\cap A\neq \emptyset$, thus 
$\tilde f(A)=A$. Furthermore $T_2(A)=A$, $\dm\:(\Pr_1(A))\leqslant k+2M$ and $\: (T_2\!\circ\! \tilde h^n)(A)=\tilde h^n(A)$, for every $n\in\Z$. 
\medskip

Consider $l\in\N$ with $l>2(k+2M+1)$ such that the set,
$$F=\left\{(x,y)\in\R^2;\; k+M \leqslant x \leqslant l-M\right\},$$ 
satisfies $\dm\:(\Pr_1(F))>k+2M+2$.\medskip 

We can deduce, 
	\[\dm\:[\Pr_1(B\setminus int(T_1^{l+k}(B)))]\leqslant l+k+2M. \] 

Therefore, $$\dm\:[\Pr_1(B\setminus int(T_1^{2l+k}(B)))]\leqslant 2(l+k+2M).$$

Since, $\de_{(1,0)}(\tilde h)$ is unbounded, there are $z_1,\: z_2\in\R^2$ and $n\in\Z$ such that,
\[\left|\Pr_1[(\tilde h^n(z_1)-z_1)-(\tilde h^n(z_2)-z_2)]\right|>2(l+k+2M).\] 

As $\dm\:(\Pr_1(A))\leqslant k+2M$, we have in particular that if $z_1,\: z_2\in A$ then, 
$$\left|\Pr_1[\tilde h^n(z_1)-\tilde h^n(z_2)]\right|>2l+k+2M.$$

So we may suppose $\tilde h^n(z_1)\in A$ and $\tilde h^n(z_2)\in T_1^l(A)$, replacing $\ti h^n$ by $\ti h^n+T^j_1$, for some $j\in\Z$, if necessary. \medskip

We define, $$E=[T^k_1(int(B))\setminus T_1^l(B)]\setminus \tilde h^n(A).$$ We can see that $E$ is open, $\tilde f$-invariant and 
$T_2(E)=E$, because $B$ and $\tilde h^n(A)$ are.\medskip

$ii.)$ We most show that there exist $M_0>0$ such that, $\dm(U)\leqslant M_0$, being $U$ any connected component of $E$.\medskip

Since $E$ is contained in $T^k_1(int(B))\setminus T_1^l(B)$, we deduce that 
$$\dm\:(\Pr_1(E))\leqslant (l+M)-(k-M)=M_1.$$ 

Let $\gamma:[0,1]\to \R^2$ be a path contained in $\tilde h^n(A)$, from $A$ to $T^l_1(A)$. As 
$(T^j_2\! \circ\! \tilde h^n)(A)=\tilde h^n(A)$, for every $n,\: j\in\Z$, so the path $T^j_2(\gamma)$ is also contained in 
$\tilde h^n(A)$, for every $j\in\Z$, from $A$ to $T^l_1(A)$. But $\gamma$ is compact, so there exists $a,\: b\in\R$ such that 
$a\leqslant \Pr_2(z)\leqslant b$, for every $z\in \gamma$. Then, for any connected component $U$ of $E$, 
$diam(\Pr_2(U))\leqslant \left|b-a+1\right|$.\medskip

Thus, set $M_2=\left|b-a+1\right|$ and $M_0=\sqrt{M_1^2+M_2^2}$.\medskip

$iii.)$ Let us see that for every $z\in\R^2$, there exist $u\in\Z^2$ such that $z+u\in E$.\medskip

Suppose by contradiction that there exist $z\in\R^2$ such that 
$(z+\Z^2)\cap E=\emptyset$. But there is $u\in\Z^2$ such that $w=z+u\in F$, so $w\in \tilde h^n(A)$. Since 
$\dm(\Pr_1(F))>k+2M+2$, we can choose $u$ such that $T_1^j(w)\in (F\cap \tilde h^n(A))$, for $j=0,\ldots,k+2M+1$.
Then $\tilde h^{-n}(T_1^j(w))\in A$, are $k+2M+2$ translate of a point in $A$. It contradicts that $\dm(\Pr_1(A))\leqslant k+2M$.

\end{proof}
\medskip


\subsection{An auxiliary result}

Next lemma can be found in \cite{kk}.

\begin{lem}\label{kk}
Let $f\in \hom_0(\T^2)$, $A\in SL(2,\Z)$ and $h\in \hom(\T^2)$ isotopic to $f_A$. Let $\tilde f$ and $\tilde h$ be respective lifts. Then 
$$\rho(\tilde h \tilde f \tilde h^{-1})=A\rho(\tilde f).$$ In particular, $\rho(\tilde A \tilde f \tilde A^{-1})=A\rho(\tilde f).$
\end{lem}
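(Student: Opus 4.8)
The plan is to compute the rotation set of the conjugate $\tilde h\tilde f\tilde h^{-1}$ directly from the definition, using the fact that conjugation by a map isotopic to the linear automorphism $f_A$ is, at the level of lifts, a bounded perturbation of the linear map $A$. First I would fix a lift $\tilde h$ of $h$; since $h$ is isotopic to $f_A$, the map $z\mapsto \tilde h(z)-Az$ descends to a continuous map on $\T^2$, hence is bounded: there is $C>0$ with $\|\tilde h(z)-Az\|\leqslant C$ for all $z\in\R^2$, and likewise $\|\tilde h^{-1}(z)-A^{-1}z\|\leqslant C'$ for all $z$. This is the one structural input; everything else is bookkeeping.

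Next I would take a point $v\in\rho(\tilde f)$, so there are sequences $n_i\to\infty$ and $x_i\in\R^2$ with $(\tilde f^{n_i}(x_i)-x_i)/n_i\to v$. Set $g=\tilde h\tilde f\tilde h^{-1}$, so that $g^{n}=\tilde h\tilde f^{n}\tilde h^{-1}$ for all $n\in\Z$, and put $y_i=\tilde h(x_i)$. Then
\begin{equation*}
g^{n_i}(y_i)-y_i=\tilde h\bigl(\tilde f^{n_i}(x_i)\bigr)-\tilde h(x_i).
\end{equation*}
Writing $\tilde h(z)=Az+\eta(z)$ with $\|\eta\|\leqslant C$, the right-hand side equals $A\bigl(\tilde f^{n_i}(x_i)-x_i\bigr)+\eta(\tilde f^{n_i}(x_i))-\eta(x_i)$, and the last two terms are bounded by $2C$. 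Dividing by $n_i$ and letting $i\to\infty$ gives $(g^{n_i}(y_i)-y_i)/n_i\to Av$. Hence $A\rho(\tilde f)\subseteq\rho(g)$. For the reverse inclusion I would run the identical argument with $\tilde h$ replaced by $\tilde h^{-1}$ (which is a lift of $h^{-1}$, isotopic to $f_{A^{-1}}$, hence also a bounded perturbation of $A^{-1}$), applied to the homeomorphism $g$ in place of $\tilde f$: this yields $A^{-1}\rho(g)\subseteq\rho(\tilde h^{-1}g\tilde h)=\rho(\tilde f)$, i.e. $\rho(g)\subseteq A\rho(\tilde f)$. Combining the two inclusions gives $\rho(\tilde h\tilde f\tilde h^{-1})=A\rho(\tilde f)$. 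The final sentence ``$\rho(\tilde A\tilde f\tilde A^{-1})=A\rho(\tilde f)$'' is the special case $h=f_A$, $\tilde h=\tilde A$ the linear lift, where $\eta\equiv 0$ and the estimates are trivial.

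I do not expect a serious obstacle here; the only point requiring a little care is the compatibility of the chosen lifts. One must check that $\tilde h\tilde f\tilde h^{-1}$ is a legitimate lift of $hfh^{-1}$ and that replacing $\tilde h$ by another lift $\tilde h+T_1^aT_2^b$ merely changes $\eta$ by a constant integer vector, hence does not affect the boundedness estimate or the limit — so the statement is independent of which lifts are used, as long as $\tilde h$, $\tilde f$ and $\tilde h\tilde f\tilde h^{-1}$ are taken consistently. If one prefers, the same conclusion can instead be read off from Theorem \ref{convexo} by transporting invariant measures: $\mu\mapsto h_*\mu$ is a bijection $\eM(f)\to\eM(hfh^{-1})$ preserving ergodicity, and a change-of-variables computation shows $\rho(h_*\mu,\tilde h\tilde f\tilde h^{-1})=A\,\rho(\mu,\tilde f)$, after which $\rho(\tilde h\tilde f\tilde h^{-1})=\rho_m(\tilde h\tilde f\tilde h^{-1})=A\rho_m(\tilde f)=A\rho(\tilde f)$; I would present the direct argument above as the main proof and mention this alternative only in passing.
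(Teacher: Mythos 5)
The paper does not prove this lemma at all --- it simply cites it to an external reference --- so there is no in-paper argument to compare against. Your proof is correct and is the standard one: the key observation that $\tilde h - A$ is $\Z^2$-periodic, hence bounded, so that conjugation distorts $n$-step displacements by a uniformly bounded amount, gives $A\rho(\tilde f)\subseteq\rho(\tilde h\tilde f\tilde h^{-1})$ directly from the definition, and the reverse inclusion follows by symmetry using $\tilde h^{-1}$ and $A^{-1}$. Your remarks on lift-independence (changing $\tilde h$ by an integer translation only conjugates $\tilde h\tilde f\tilde h^{-1}$ by that translation) and the alternative route via pushing forward invariant measures and Theorem \ref{convexo} are both sound; either version would serve as a self-contained proof here.
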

\medskip

\begin{ob}\label{v}
If $\tilde f$ is $v$-annular, for some $v\in\Z_*^2$, then there exist some homeomorphism conjugated to $\tilde f$ that is $(0,1)$-annular. In fact, given 
$v=(q,p)\in \Z^2$, there exist integers $a,\: b$, such that $pa+qb=1$. Consider the matrix
	\[ A=
	\begin{bmatrix}
		p & -q\\
		b & a
	\end{bmatrix}.
	\]
Hence $det(A)=1$, but $A\cdot(q,p)=(0,1)$, so $Lema\; \ref{kk}$ shows that $A\tilde f A^{-1}$ is $(0,1)$-annular.
\end{ob}
\medskip


\section{Theorems A and B} \label{dem}

\subsection{Noncommuting homeomorphisms}

Let us show how Theorem \ref{ncom} follows from Theorem \ref{d1}.\medskip

\begin{main}\label{ncom}
If $\rho(\tilde f)$ is a segment $J$ with rational slope containing rational points and $\rho(\tilde h)$ is whatever following cases:
\begin{enumerate}
	\item It has nonempty interior.
	\item It is any nontrivial segment nonparallel to $J$.
\end{enumerate}
Then, $\tilde f$ and $\tilde h$ do not commute.
\end{main}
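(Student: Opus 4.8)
The plan is to derive Theorem \ref{ncom} as a corollary of the technical result, Theorem \ref{d0} (here labelled \ref{d1}), together with the rotation-theory facts collected in the preliminaries. Suppose for contradiction that $\tilde f$ and $\tilde h$ commute. Since $\rho(\tilde f)$ is a segment $J$ with rational slope containing rational points, Theorem \ref{pa} gives that $f^k$ is annular for some $k\in\N$; say the lift $\tilde f^k$ is $v$-annular for a suitable $v\in\Z^2_*$. Note that $f^k$ and $h$ still commute, and $\tilde f^k$ and $\tilde h$ are commuting lifts, so we may apply Theorem \ref{d1} to the pair $(\tilde f^k,\tilde h)$. It yields the dichotomy $\de_{v^\bot}(\tilde h)<\infty$ or $\de(\tilde f^k)<\infty$. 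The strategy is to rule out both alternatives under each of the hypotheses (1) and (2) on $\rho(\tilde h)$.

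First I would dispatch the second alternative. If $\de(\tilde f^k)<\infty$, then in particular $\de_u(\tilde f^k)<\infty$ for the direction $u=v^\bot$ transverse to $J$, so Lemma \ref{proye} forces $\Pr_u(\rho(\tilde f^k))$ to be a single point; but $\rho(\tilde f^k)=k\,\rho(\tilde f)=kJ$ is a nontrivial segment whose projection onto $u$ is a nondegenerate interval (since $J$ is nontrivial and $u$ is transverse to its direction), a contradiction. Hence $\de(\tilde f^k)=\infty$, and we are left with $\de_{v^\bot}(\tilde h)<\infty$. The direction $v^\bot$ is, by Remark \ref{int} applied to $\tilde f^k$ (which gives $\rho(\tilde f^k)\subset L(v,a)$), exactly the direction of the line containing $J$; so the surviving conclusion says $\rho(\tilde h)$ has bounded deviation in the direction perpendicular to the transverse of $J$ — i.e. $\tilde h$ has bounded deviation in a direction transverse to $J$. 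More precisely, Lemma \ref{proye} with $u=v^\bot$ shows $\rho(\tilde h)$ lies in a line $L(v,a')$ parallel to $J$.

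Now I use the hypotheses on $\rho(\tilde h)$. In case (1), $\rho(\tilde h)$ has nonempty interior, so it cannot be contained in any line; this contradicts $\rho(\tilde h)\subset L(v,a')$, and we are done. In case (2), $\rho(\tilde h)$ is a nontrivial segment nonparallel to $J$: then by Proposition \ref{wanul} (applied with $\tilde h$ and with $v^\bot$ in the role of the vector nonparallel to the direction $w$ of the line containing $\rho(\tilde h)$ — here the direction of $\rho(\tilde h)$ is nonparallel to $J$, equivalently to $v$, so $v^\bot$ is nonparallel to the $w$ of Proposition \ref{wanul}, after matching conventions) we get $\de_{v^\bot}(\tilde h)=\infty$, again contradicting $\de_{v^\bot}(\tilde h)<\infty$. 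Either way we reach a contradiction, so $\tilde f$ and $\tilde h$ cannot commute.

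The main obstacle I anticipate is bookkeeping with the directions and the $(\cdot)^\bot$ conventions: one must check that "$v$-annular for $\tilde f^k$" lines up so that $v^\bot$ is genuinely the direction of $J$, that Theorem \ref{d1}'s output $\de_{v^\bot}(\tilde h)<\infty$ feeds correctly into Lemma \ref{proye}, and that the non-parallelism hypothesis in case (2) is precisely what Proposition \ref{wanul} needs. A secondary point is the passage from $f$ to the iterate $f^k$: one should confirm $\rho(\tilde f^k)=k\rho(\tilde f)$ and that annularity of $f^k$ rather than $f$ is harmless, since commuting is preserved and Theorem \ref{d1} is applied to $(\tilde f^k,\tilde h)$. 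Neither step is deep, but getting the constants and orthogonal complements consistent is where care is required.
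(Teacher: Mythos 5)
Your proposal is correct and follows essentially the same route as the paper: Theorem \ref{pa} to get annularity of an iterate, Theorem \ref{d1} for the dichotomy, and Lemma \ref{proye} resp.\ Proposition \ref{wanul} to kill both horns (you merely eliminate the two alternatives in the opposite order, and spell out the ``pseudo-rotation'' contradiction via Lemma \ref{proye} rather than asserting it). One small bookkeeping slip: in case (2) the vector to feed into Proposition \ref{wanul} is $v$ itself (which is nonparallel to $w$ since $J\parallel v$), not $v^{\bot}$ — that is what yields the desired conclusion $\de_{v^{\bot}}(\tilde h)=\infty$, and note that $v^{\bot}$ could even be parallel to $w$.
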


\begin{proof}
Suppose by contradiction that $\tilde f$ and $\tilde h$ commute. By Theorem \ref{pa} there is $k\in\N$ such that $\tilde f^k$ is annular. Denote $\ti g=\ti f^k$, so $\tilde g$ and $\tilde h$ commute and suppose that $\ti g$ is $v$-\emph{annular}, for some 
$v\in \Z^2_*$.\medskip

We claim that whatever cases (1) or (2) above for $\rho(\ti h)$, $\de_{v^{\bot}}(\tilde h)$ is unbounded. In fact, the first case follows from Lemma \ref{proye} and the second holds applied Proposition \ref{wanul}.\medskip

Hence, the Theorem \ref{d1} implies $\de (\tilde g)<\infty$. Therefore, $\tilde g$ is a pseudo-rotation. It yields a contradiction because $\rho(\tilde f)$ is a nontrivial segment.
\end{proof}
\medskip


\subsection{Theorem B}
From now on let $f, h\in \hom_0(\T^2)$ commute and let $\ti f,\ti h$ be respective commuting lifts.
\medskip

In the proof of next proposition, we denote by $L=\left\{t(0,1);\; t\in\R\right\}$.\medskip

\begin{pro}\label{d}
If $\tilde f$ is $(0,1)$-annular then $\de_{(1,0)}(\tilde h)<\infty$ or 
$\de (\tilde f)<\infty$.
\end{pro}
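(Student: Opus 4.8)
The strategy is to argue by contradiction using Lemma \ref{mau}. So suppose $\tilde f$ is $(0,1)$-annular but both $\de_{(1,0)}(\tilde h)=\infty$ and $\de(\tilde f)=\infty$. The first hypothesis together with Lemma \ref{mau} produces an open set $E$ which is $\tilde f$-invariant, $T_2$-invariant, projects onto $\T^2$, and whose connected components have uniformly bounded diameter. The point of such an $E$ is that the complement $\R^2\setminus E$ is nonempty (since each component of $E$ is bounded), and more importantly one gets a $\tilde f$- and $T_2$-invariant decomposition of the plane into ``bounded pieces'' of $E$ separated by the complement; this is exactly the kind of structure that forces bounded deviation of $\tilde f$ in the direction transverse to the annular direction, i.e. $\de_{(1,0)}(\tilde f)<\infty$.

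\textbf{Key steps.} First I would extract from $E$ a genuine obstruction to horizontal drift. Since $\tilde f$ is $(0,1)$-annular, all $\tilde f$-orbits stay within a bounded horizontal strip measured from their starting point, so the relevant deviation to control is $\de_{(1,0)}(\tilde f)$; I claim this is finite. To see it, note that $\R^2\setminus E$ is a nonempty closed $\tilde f$-invariant, $T_2$-invariant set whose components, or rather whose ``horizontal reach,'' must be controlled: because $\pi(E)=\T^2$ and the components of $E$ have diameter $\le M_0$, no point of $\R^2\setminus E$ can be too far horizontally from a point of $E$, and $E$ being a union of bounded $\tilde f$-invariant pieces means $\tilde f$ cannot move a point of one component arbitrarily far horizontally (it stays in that component, diameter $\le M_0$, hence within $M_0$ of itself horizontally for all time) while points of the complement are pinned by continuity/compactness arguments across the $\Z^2$-periodic structure. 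Assembling these gives a uniform bound $\left|\Pr_1(\tilde f^n(z)-z)\right|\le M_0$ for all $z$ in $\overline E$, and then a bound for all $z\in\R^2$ by using that every $z$ is within bounded distance of $\overline E$ via $\pi(E)=\T^2$ and the $T_2$-periodicity, combined with $\sup_z\|\tilde f(z)-z\|=\Delta<\infty$. Since $\tilde f$ is $(0,1)$-annular it already has $\de_{(0,1)}(\tilde f)<\infty$ (bounded displacement in the $(1,0)$-coordinate means in particular bounded deviation), so combining the two coordinate-wise deviation bounds yields $\de(\tilde f)<\infty$, contradicting our assumption $\de(\tilde f)=\infty$. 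Hence one of $\de_{(1,0)}(\tilde h)<\infty$ or $\de(\tilde f)<\infty$ must hold.

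\textbf{Main obstacle.} The delicate part is the quantitative claim that the geometry of $E$ forces $\de_{(1,0)}(\tilde f)<\infty$; a priori $E$ only controls \emph{where} $\tilde f$ can send points component-wise, and one must rule out that $\tilde f$ permutes the (bounded) components of $E$ in a way that produces unbounded horizontal transport. Here one uses crucially that $\tilde f(E)=E$ \emph{as a set} and that $T_2(E)=E$ while $\tilde f$ commutes with $T_1$ as well (as any lift does), so the induced action on the countable family of components is by a bounded-displacement permutation; the bound on component diameters then localizes each orbit. I expect the bookkeeping to go through because Lemma \ref{mau} was built precisely with this $E$ in mind, but getting the uniform constant cleanly — tracking how $M_0$, $\Delta$, and the $\Z^2$-translates interact — is the step that requires care. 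Once $\de(\tilde f)<\infty$ is derived under the contradiction hypothesis, the proposition follows immediately.
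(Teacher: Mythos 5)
There is a genuine gap here — in fact the proposal inverts which direction of deviation is the difficult one, and it rests on a false structural claim about the set $E$. First, the quantity you devote the argument to, $\de_{(1,0)}(\tilde f)$, is finite for trivial reasons: $(0,1)$-annularity of $\tilde f$ means precisely that $\left|\Pr_1[\tilde f^n(z)-z]\right|\leqslant M$ for all $z$ and $n$, so the horizontal deviation is at most $2M$ with no reference to $E$ at all. Your closing inference, that bounded displacement in the $(1,0)$-coordinate "in particular" gives $\de_{(0,1)}(\tilde f)<\infty$, is exactly backwards: annularity gives no control whatsoever on the vertical deviation, and bounding the vertical deviation is the entire content of the proposition. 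Second, your mechanism for localizing orbits — a point of a component of $E$ "stays in that component, diameter $\leqslant M_0$, hence within $M_0$ of itself for all time" — is false: $E$ is $\tilde f$-invariant only as a set, $\tilde f$ permutes its connected components, and a priori the orbit $\tilde f^n(U)$ of a component drifts vertically; the whole question is whether that drift is linear with a uniform rate. Bounded component diameter only bounds the deviation between two points of the \emph{same} component, not between points of different components, and never the displacement itself.

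What is actually needed, and what the paper supplies, are two further ideas absent from your sketch. (1) Take an ergodic measure $\mu$ realizing an extremal point $(0,a)$ of $\rho(\tilde f)$ and apply Atkinson's Lemma (Corollary \ref{atk}) to the horizontal displacement $\phi_1$, which has zero $\mu$-integral; the resulting recurrence produces a component $U$ of $E$ and integers $\overline n,\overline m$ with $\tilde f^{\overline n}(U)=T_2^{\overline m}(U)$, hence $\tilde f^{k\overline n}(U)=T_2^{k\overline m}(U)$ for all $k$, so every point of $U$ drifts linearly with vector $(0,\overline m/\overline n)$ up to an error $M_0$. (2) Cover a fundamental domain $D$ by finitely many integer translates $U_1,\dots,U_s$ of components of $E$, ordered so that each meets the union of its predecessors, and chain the estimate across the cover (the constant degrades to $(2j-1)M_0$ at step $j$) to conclude that every point of $D$, hence of $\R^2$, has the same linear drift up to a uniform constant; this yields $\de(\tilde f)<\infty$. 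Without step (1) you have no candidate rotation vector and no control on how the permutation of components transports points vertically; without step (2) the bound does not propagate from one component to all of $\R^2$. You should rebuild the argument around these two steps.
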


\begin{proof}
Suppose that $\de_{(1,0)}(\tilde h)$ is unbounded. Then there is a set $E$ with the properties in Lemma \ref{mau}. \medskip

Since the connected components of $E$ have uniformly bounded diameter, we can consider $M_0>0$ such that for every connected component $U$ of 
$E$, $\dm\;(U)< M_0$.

\begin{afp}\label{m1}
There exist $\overline{n}\in \Z^+$, $\: \overline{m}\in \Z$ and $U$ a connected component of $E$, such that 
$$\left\|\tilde f^{k\overline{n}}(x)-x-k(0,\overline{m})\right\|\leqslant M_0,$$ for all $k\in\N$ and every $x\in U$.
\end{afp}

Indeed, because of the Remark \ref{int}, we may suppose that $\rho(\tilde f)\subset L$, that is,  
$\rho(\tilde f)=\left\{0\right\}\times \left[a,b\right]$, with $a\leqslant b$, for $a,b\in\R$.\medskip 

Since $(0,a)$ is an extremal point of $\rho(\tilde f)$, from Theorem \ref{convexo} there exist $\mu\in \eM_e(f)$ such that for 
$\mu$-a.e $x\in\T^2$, 
$$\rho(\mu,f)=\int \phi\ d\mu=(0,a),$$ 
where $\phi:\T^2\to \R^2$ is the displacement function defined by 
$\phi(x)=\tilde f(\tilde x)-\tilde x$ for $\tilde x\in \pi^{-1}(x)$. Then, follows from Proposition \ref{birkof}, that for $\mu$-a.e $x\in\T^2$ and every $\tilde x\in \pi^{-1}(x)$, 
$$\lim_{n\to\infty}\frac{\tilde f^n(\tilde x)-\tilde x}{n} =(0,a).$$

Consider the first projection $\Pr_1:\R^2\to \R$ and the function $\phi_1:\T^2\to\R$, defined by 
$\phi_1(x)=\Pr_1(\phi)(x).$\medskip

Hence, $$\int \phi_1\ d\mu=0.$$

By the Atkinson's Lemma (Corollary \ref{atk}), applied to the function $\phi_1$, we have that for $\mu$-a.e $x\in \T^2$, there is an increasing sequence of positive integers $(n_i)_{i\in\N}$, such that

\begin{equation*}
  f^{n_i}(x)\to x\ \ \ \textit{and} \ \ \
  \sum_{i=0}^{n_i-1}(\phi_1\circ f^{k})(x)\to 0, \ \ \ \textit{as} \ \ \ i\to \infty.
\end{equation*} 

That is, for $\mu$-a.e $x\in \T^2$ and $\ti x\in \pi^{-1}(x)$,

\begin{equation}\label{at}
   \Pr_1[\tilde f^{n_i}(\tilde x)- \tilde x]\to 0, \ \ \ \textit{as} \ \ \ i\to \infty.
\end{equation} 

Let $x\in T^2$ satisfying (\ref{at}) and $\tilde x=\pi^{-1}(x)$, by property \textit{iii.)} of $E$, there is 
$u\in\Z^2$ such that $\tilde x+u=w\in U$, for some connected component $U$ of $E$. Hence, there are sequences 
$(n_i)_{i\in\N}\in \Z^+$ and $(m_i)_{i\in\N}\in\Z$, such that $\tilde f^{n_i}(w)-T_v^{m_i}(w)\to 0$, as 
$i\to \infty$.\medskip

As $ U$ is open set, let $\epsilon>0$ be such that $B_{\epsilon}(w)\subset U$. Then there is $i_0\in\N$ such that for all $i>i_0$, 
$\tilde f^{n_i}(w)\in T_v^{m_i}(B_{\epsilon}(w))\subset T_v^{m_i}(U)$. In particular, for some $\overline{m}\in\Z$, there is 
$\overline{n}\in\Z^+$ such that $\tilde f^{\overline{n}}(U)\cap T_v^{\overline{m}}(U)\neq \emptyset$. By property \textit{i.)} of $E$, we have that $T_v^m(U)$ is also a connected component of $E$, for every $m\in\Z$. But $E$ is invariant under $\tilde f$, so $\tilde f$ permutes connected components of $E$. Therefore 
$\tilde f^{\overline{n}}(U)=T_v^{\overline{m}}(U)$. By induction, we have that 
$\tilde f^{k\overline{n}}(U)=T_v^{k\overline{m}}(U)$, for every $k\in\N.$\medskip

Since $M_0>\dm(U)$ then, 

\begin{equation*}
\left\|\tilde{f} ^{k\overline{n}}(x)-x-k(0,\overline{m})\right\|\leqslant M_0, \ \ \ \textit {for every} \ \ x\in U \ \textit {and} \   k\in \N,
\end{equation*}  
concluding the proof of the Claim $\ref{m1}$.\medskip

Now consider $D$ an integer translate of $[0,1]^2$ containing $w$.\medskip

We claim that there exist a finite open cover $C=\bigcup_{j=1}^s\left\{U_j\right\}$ for $D$, where every $U_j$ is connected and for some $u_j\in\Z^2$,  $U_j+u_j$ is a connected component of $E$.\medskip

In fact, by property $iii.)$ of $E$, it follows that for every point $y\in D$, there is $u_*\in\Z^2$, such that 
$(y+u_*)\in U^*_y$, for some connected component $U^*_y$ of $E$.\medskip 

Denote $U^\prime=U^*_y-u_*$ and $C^\prime=\bigcup_{y\in D}\left\{U^\prime_y\right\}.$ By properties of $E$, the set $C^\prime$ is an open cover for $D$ such that every $U^\prime + u_*$ is a connected component of $E$. Moreover, $C^\prime$ contains the set $U=U^*_w$. Since $D$ is compact, there is a finite subcover for $D$, denote by 

\begin{align*}
C=\bigcup_{j=1}^s\left\{U_j\right\}, \ \ \text{where}\ \left\{U_j\right\}\subset C^\prime,\ \text{for every}\ j=1,\ldots,s,
\end{align*}
as claimed.
\medskip

Set $U^*_j=U_j+u_j$ for some $u_j\in\Z^2$. So we have $\tilde f^n(U_j)=\tilde f^n(U^*_j)-u_j$ is an integer translate of any connected component of $E$, for every $n\in\Z$, because $\ti f$ permutes connected components of $E$. By property $ii.)$ of $E$, $\dm(\tilde f^n(U_j))< M_0,$ for every 
$n\in\Z$ and $j=1,\ldots, s$.\medskip

We may assume $\left\{U_1\right\}=\left\{U\right\}\subset C$, where $U$ is the open set of Claim $\ref{m1}$ (otherwise consider a finite cover $\left\{U\right\}\cup C$). Since $D$ is connected, without lost of generality, we may list the open sets $U_j$, $j=1,\ldots,s$ of $C$, such that 

\begin{equation}\label{uj}
	U_j\cap\left[\bigcup_{1\leqslant l<j}U_l\right]\neq\emptyset,\ \ \textit{for every $\ j=2,\ldots,s$}.
\end{equation}

\begin{afp}\label{s}
There exists $M>0$ and $p\in\Q^2$ such that, $\left\|\tilde f^k(y)-y-kp\right\|\leqslant M,\ $ for every $y\in D$ and $k\in\N$.
\end{afp}

To proof this claim let us first show by induction that 
\begin{equation}\label{induc}
	\left\|\tilde f^{k\overline{n}}(y)-y-k(0,\overline{m})\right\|\leqslant (2j-1)M_0,
\end{equation}
  for every $y\in U_j,\ $ $j=1,\ldots,s,\ $ and $k\in\N$, where $\overline{n}$ and $\overline{m}$ are like Claim $\ref{m1}$.

\begin{itemize}
	\item[i.] The case $j=1$ is Claim	\ref{m1}.
	\item[ii.] Suppose it holds for every $U_l$, with $1\leqslant l\leqslant j$, and let us see the case $j+1$:\medskip
	
	By (\ref{uj}), we have $U_{j+1}\cap U_l\neq \emptyset$, for some $U_l$ with $1\leqslant l \leqslant j$. Then,
	$\tilde f^{\overline{n}}(U_{j+1})\cap \tilde f^{\overline{n}}(U_{l})\neq \emptyset$. By induction we see that,
  $\tilde f^{k\overline{n}}(U_{j+1})\cap \tilde f^{k\overline{n}}(U_{l})\neq \emptyset$, for every $k\in\N.$ \medskip
	
	Since $\dm(\tilde f^n(U_j))< M_0,$ for every $n\in\Z$ and $j=1,\ldots, s$, then for every $y\in U_{j+1}$, $k\in\N$ and some 
	$x\in U_{j+1} \cap U_l$, we have
\begin{align*}
\left\|\tilde f^{k\overline{n}}(y)-y-k(0,\overline{m})\right\| &\leqslant \left\|\tilde f ^{k\overline{n}}(y)-\tilde f ^{k\overline{n}}(x)\right\|+ \left\|\tilde f ^{k\overline{n}}(x)-x-k(0,\overline{m})\right\| + \left\|x-y\right\|\\
& \leqslant M_0 + (2l-1)M_0 + M_0 \leqslant [2(j+1)-1]M_0.
\end{align*}  
\end{itemize}
Thus equation (\ref{induc}) holds.\medskip

Setting $M_1=(2s-1)M_0$, it follows that $\left\|\tilde f ^{k\overline{n}}(y)-y-k(0,\overline{m})\right\|<M_1$, for every $y\in D$ and $k\in N$.
\medskip

Denote by $p=\frac{1}{\overline{n}}(0,\overline{m})\in \Q^2$. We know that given $k\in \N$, there are $t\: ,r \in \N$, such that 
$k=t\overline{n}+r$, where $0\leqslant r\leqslant \overline{n}$.\medskip

Hence, for every $y\in D$ and $k\in \N$,
\begin{align*}
\left\|\tilde{f} ^{k}(y)-y-kp\right\| & =  \left\|\tilde{f} ^{t\overline{n}+r}(y)-y-(t\overline{n}+r)p\right\|\\
& \leqslant \left\|\tilde{f} ^r(\tilde{f}^{t\overline{n}}(y))-\tilde{f}^{t\overline{n}}(y)\right\| + \left\|\tilde{f} ^{t\overline{n}}(y)-y-t(0,\overline{m})\right\|+	\left\|rp\right\|\\
&\leqslant n_i\ \sup_{z\in \R^2}\left\|\tilde{f}(z)-z\right\|+ M_1 + r\left\|p\right\|=M,
\end{align*} 
completing the proof of the Claim $\ref{s}$.\medskip

Given $z\in\R^2$, let $u\in\Z^2$ be such that $z+u=y\in D$. \medskip

Thus, 

\begin{align*}
\left\|\tilde{f}^{k}(z)-z-kp\right\| = &\left\|\tilde{f} ^{k}(z+u)-(z+u)-kp\right\|= \left\|\tilde{f} ^{k}(y)-y-kp\right\| 
\leqslant M.
\end{align*}  

Therefore, given $z\in \R^2$ and $k\in\N$, there exists $M>0$ and $p\in\Q^2$, such that $$\left\|\tilde{f}^{k}(z)-z-kp\right\| \leqslant M.$$

This implies that $\de(\tilde{f})<\infty.$
\end{proof}
\medskip

Now let us consider the general case, for every $v\in\Z^2_*$.
\medskip

\begin{main}\label{d1}
If $\tilde f$ is $v$-annular, for some $v\in\Z^2_*$, then $\de_{v^{\bot}}(\tilde h)<\infty$ or $\de (\tilde f)<\infty$.
\end{main}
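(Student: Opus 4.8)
The plan is to reduce Theorem \ref{d1} to the special case already proved in Proposition \ref{d} by a change of coordinates. By Remark \ref{v}, since $\tilde f$ is $v$-annular for some $v\in\Z^2_*$, there is a matrix $A\in SL(2,\Z)$ with $A\cdot v=(0,1)$, and the conjugate $A\tilde f A^{-1}$ is $(0,1)$-annular; here I am identifying $A$ with its induced homeomorphism $f_A$ of $\T^2$ and with the linear lift $\tilde A$. Set $\tilde f_1=A\tilde f A^{-1}$ and $\tilde h_1=A\tilde h A^{-1}$. First I would check that $\tilde f_1$ and $\tilde h_1$ still commute: conjugation by a fixed invertible element is an automorphism of the group generated by $\tilde f,\tilde h$ (they commute by hypothesis, and the commuting lifts exist by \cite{pk}), so $\tilde f_1\tilde h_1=A\tilde f\tilde h A^{-1}=A\tilde h\tilde f A^{-1}=\tilde h_1\tilde f_1$. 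Both are lifts of elements of $\hom_0(\T^2)$ — indeed $f_A f f_A^{-1}$ and $f_A h f_A^{-1}$ are again homotopic to the identity since $f_A$ is a homeomorphism and $f,h$ are — so Proposition \ref{d} applies to the pair $\tilde f_1,\tilde h_1$.

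Applying Proposition \ref{d}, I get that either $\de_{(1,0)}(\tilde h_1)<\infty$ or $\de(\tilde f_1)<\infty$. The remaining work is to translate each of these two conclusions back through the conjugacy. For the deviation $\de$, the key observation is that for any lift $\tilde g$ and any $n\in\Z$ one has $(A\tilde g A^{-1})^n(x)-x = A\bigl(\tilde g^n(A^{-1}x)-A^{-1}x\bigr)$, hence
\begin{equation*}
\bigl((A\tilde g A^{-1})^n(x)-x\bigr)-\bigl((A\tilde g A^{-1})^n(y)-y\bigr)=A\Bigl[\bigl(\tilde g^{n}(A^{-1}x)-A^{-1}x\bigr)-\bigl(\tilde g^{n}(A^{-1}y)-A^{-1}y\bigr)\Bigr].
\end{equation*}
Since $A$ is a linear isomorphism it is bi-Lipschitz, so $\de(\tilde g)<\infty$ if and only if $\de(A\tilde g A^{-1})<\infty$; applied to $\tilde g=\tilde f$ this gives $\de(\tilde f)<\infty$ from $\de(\tilde f_1)<\infty$. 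For the directional deviation, I would compute $\Pr_{(1,0)}(Aw)=\langle Aw,(1,0)\rangle/\|(1,0)\|=\langle w, A^{\mathsf T}(1,0)\rangle$, so up to the nonzero scalar $\|A^{\mathsf T}(1,0)\|$ this equals $\Pr_{u}(w)$ where $u=A^{\mathsf T}(1,0)$. One checks $u$ is, up to scaling, exactly $v^\bot$: indeed $\langle u,v\rangle=\langle A^{\mathsf T}(1,0),v\rangle=\langle(1,0),Av\rangle=\langle(1,0),(0,1)\rangle=0$, so $u\perp v$, i.e.\ $u$ is parallel to $v^\bot$. Therefore, using the displayed identity with $\tilde g=\tilde h$ and projecting onto $(1,0)$, boundedness of $\de_{(1,0)}(\tilde h_1)$ is equivalent to boundedness of $\de_{v^\bot}(\tilde h)$. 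Combining, either $\de_{v^\bot}(\tilde h)<\infty$ or $\de(\tilde f)<\infty$, which is the claim.

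The only genuine subtlety — and the step I would be most careful about — is the bookkeeping with lifts: Remark \ref{v} and Lemma \ref{kk} are stated about rotation sets and about a specific linear lift $\tilde A$, so I must make sure that "$A\tilde f A^{-1}$ is $(0,1)$-annular" really is a statement about the lift $\tilde f$ we started with (not merely about $f$), and that the commuting lifts of $f_A f f_A^{-1}$ and $f_A h f_A^{-1}$ that Proposition \ref{d} implicitly uses are precisely $\tilde f_1$ and $\tilde h_1$. This is fine because conjugation by the fixed lift $\tilde A$ takes commuting lifts to commuting lifts and commutes with the deck transformations $T_1,T_2$ only up to the action of $A$ on $\Z^2$ — but $A(\Z^2)=\Z^2$ since $A\in SL(2,\Z)$, so $\tilde f_1,\tilde h_1$ are honest lifts of torus maps, and annularity is preserved because $A^{-1}$ maps the rational direction $(0,1)$ to the rational direction $v$. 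Everything else is the routine bi-Lipschitz estimate above; no new dynamical input beyond Proposition \ref{d}, Remark \ref{v} and Lemma \ref{kk} is needed.
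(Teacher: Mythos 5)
Your proof is correct and follows essentially the same route as the paper: conjugate by the matrix $A$ of Remark \ref{v} and apply Proposition \ref{d} to the pair $A\tilde fA^{-1}$, $A\tilde hA^{-1}$. You actually supply more detail than the paper's own two-line proof, which leaves implicit both the commutation of the conjugated lifts and the translation of the conclusions back to $\de_{v^{\bot}}(\tilde h)<\infty$ or $\de(\tilde f)<\infty$ for the original maps.
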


\begin{proof}
By Remark \ref{v}, follows that if $\tilde f$ is $v$-annular then $A\tilde fA^{-1}$ is $(0,1)$-annular. In that way the homeomorphisms 
$A\tilde fA^{-1}$ and $A\tilde hA^{-1}$ satisfy hypothesis of Proposition \ref{d}.
\end{proof}
\bigskip
%


\subsection*{Acknowledgements}
This work is part of the doctoral thesis, under the supervision of Fabio A. Tal. I am thankful to him for valuable conversations, many corrections and suggestions.


\bibliographystyle{plain}
\bibliography{bibliografia}

\end{document}